\date{}
\def\h25{\hspace{-.3cm}}
\def\leq{\leqslant}
\newtheorem{definition}{Definition}[section]
\newtheorem{proposition}{Proposition}[section]
\newtheorem{theorem}{Theorem}[section]
\newtheorem{lemma}{Lemma}[section]
\newtheorem{remark}{Remark}[section]
\newenvironment{proof}{\noindent {\bf
Proof.}}{\hfill\rule{3mm}{3mm}\par\medskip}
\begin{document}

%\baselineskip=18
\title{New notion of mild solutions 
%together with its existence and uniqueness analysis
%and its existence and uniqueness results
for nonlinear differential systems involving Riemann-Liouville derivatives of higher order with non-instantaneous impulses}

%\title{Existence 
%%%and uniqueness results 
%for mild solutions to higher order Riemann-Liouville nonlinear
%%%fractional 
%differential systems with non-instantaneous impulses via fractional resolvent}
%The pth moment asymptotical ultimate boundedness of pantograph stochastic differential equations with time-varying coefficients
%New notion of mild solutions together with its existence and uniqueness analysis for higher order Riemann-Liouville nonlinear differential systems involving non-instantaneous impulses

%Total Approximate Controllability of Higher Order Riemann-Liouville Fractional Differential Systems with Non-instantaneous Impulses 
%Higher Order Non-instantaneous Impulsive Riemann-Liouville Fractional Differential Systems: Existence and Controllability Analysis
%}
\author
{Lavina Sahijwani
\thanks{Indian Institute of Technology Roorkee--247667, India. Email: sahijwani.lavina@gmail.com}
% Research is partially supported by grants NSF DMS1600778 and NASA MIRO NX15AQ06A.}
\and
N. Sukavanam
\thanks{Indian Institute of Technology Roorkee--247667, India. Email: n.sukavanam@ma.iitr.ac.in}
}
\maketitle

\begin{abstract}
The artefact is dedicated towards the inspection of nonlinear fractional differential systems involving Riemann-Liouville  derivative with higher order and fixed lower limit including non-instantaneous impulses for existence and uniqueness results in Banach spaces. Motive of the paper is to set sufficient conditions to guarantee existence of mild solution in Banach spaces. Firstly, appropriate integral type initial conditions depending on the impulsive functions are chosen at suitable points. Mild solution of the concerned system is constructed using fractional resolvent. Subsequently, existence and uniqueness results are established  under sufficient assumptions utilising fixed point approach. An example is presented at the end to validate the methodology proposed.

%Lastly, an application is illustrated to validate the theory  presented. 

%are chosen in a different way depending on the impulsive functions at suitable points.
%where impulses ends. 
%The chosen system of inspection is non-linear fractional differential systems involving Riemann-Liouville higher order derivative along with non-instantaneous impulses. The initial conditions are chosen in a different way depending on the impulsive functions.
%First in order, is the construction of mild solution of the concerned system followed by proving the existence and uniqueness results utilising fixed point approach.
%This artefact introduces the concept of total approximate controllability in order to seek approximate controllability of the concerned system at points other than the final point also. The main motivation behind introducing the concept is to carefully examine the behaviour of dynamical systems affected by impulses of non-instantaneous kind. The chosen system of inspection is non-linear fractional differential systems involving Riemann-Liouville higher order derivative along with non-instantaneous impulses. The initial conditions are chosen in a different way depending on the impulsive functions. First in order, is the construction of mild solution of the concerned system followed by proving the existence and uniqueness results utilising fixed point approach. Next, is to establish the total approximate controllability for the system under suitable assumptions using fractional resolvent, appropriately defined Nemytskii operators  and iterative technique.

\bigskip

{\bf Keywords}: {Fixed Point, Mild Solutions, Riemann-Liouville Derivatives, Non-instantaneous Impulses, Nonlinear Systems, Fractional Resolvent }
%Approximate Controllability, Total Approximate Controllability}

\end{abstract}

\section{Introduction} \label{1}
Riemann-Liouville and Caputo type derivatives have been the focus of attention for many analysts in the field of fractional calculus (see \cite{rp ag1,rp ag main,book hindawi,A shukla1,abdul,zhou1,bala nonlinear,A shukla2,kilbas:srivast,liu:siam,pazy,podlubny,jde1,lavina,A shukla3,min zhang,zhou2,zhou3,zhou} and references therein). In the sense\textcolor{blue}{,} that it allows the function to endure discontinuity at origin, the Riemann-Liouville derivative triumphs over Caputo. On the other hand, it does not allow the use of standard initial conditions; in the Riemann-Liouville situation, the initial conditions are either integral or weighted initial conditions. The manifestation of physical significance to the beginning conditions used in Riemann-Liouville fractional order viscoelastic systems was credited to Heymans and Podlubny \cite{hey:podlubny}.\\
%%In domain of fractional calculus, Riemann-Liouville and Caputo type derivatives have maintained to be the centre of attention for numerous analysts. Riemann-Liouville derivative shows supremacy over Caputo in the sense that it allows the function involved to bear discontinuity at origin. On the other hand, it  doesn't allow the use of traditional initial conditions, the initial conditions involved in Riemann-Liouville case are either in the integral form or are weighted initial conditions. Heymans and Podlubny \cite{hey:podlubny} were the ones accredited for the manifestation of physical significance to the initial conditions used in 
%%%case 
%%regard of Riemann-Liouville fractional order viscoelastic systems.\\
\indent Recently, researchers worldwide gravitate towards the analysis of impulsive evolution systems with the focus on suitable mathematical modelling, existence of integral solution, its stability, controllability and much more. In realistic modelling,
% In real life mathematical models, 
the occurrence of perturbations, due to extrinsic intercessions, 
is inevitable, yet, unpredictable. These perturbations or sudden changes are nothing\textcolor{blue}{,} but impulses affecting the solution's behaviour majorly. The literature consists of artefacts
% mainly 
addressing mainly two types of impulses, one is, instantaneous impulses which occur suddenly and has its effect for a very smaller period of time, and the other is, non-instantaneous impulses, which start all at once but hold for a finite time interval, for example, injection of a medical drug into the human body is sudden but takes time to stabilize its effect. Mathematically, instantaneous impulses are represented by point-wise break in the dynamical systems whereas non-instantaneous impulses are represented by interval type break. Basically, non-instantaneous impulses are nothing but generalization of instantaneous impulses in terms of effective time.
Various impulsive differential systems are studied in the books \cite{book hindawi,de gruyter} and papers \cite{rp ag1,rp ag main,donal jmaa,bala nonlinear,proc am,topo 2015,min zhang}. The fractional differential systems including Caputo derivative with non-instantaneous impulses are addressed by \cite{rp ag1}. %\textcolor{blue}{Extra amount of effort and carefulness is to be poured while dealing with non-instantaneous impulsive systems governed by Riemann-Liouville fractional derivatives in terms of initial conditions.}
% The books \cite{book hindawi,de gruyter} and articles \cite{rp ag1,rp ag main,donal jmaa,topo 2015,min zhang} contribute to the study of various impulsive differential systems. \cite{rp ag1} addresses the fractional differential systems involving Caputo derivative with non instantaneous impulses. 
%Very few articles (\cite{r p ag2}, \cite{min zhang}) discusses Riemann-Liouville problems with non-instantaneous impulses upto integral solution. %To the best of the author's knowledge,
There is currently no article that addresses the existence of mild solutions
 %no such article in the literature so far focusing the 
%addressing the analysis for approximate controllability 
for higher order Riemann-Liouville fractional evolution systems having non-instantaneous impulses, and hence, is the motivation for the present artefact.\\ 
%study
\indent The study of this article revolves around the following system:
% The motive of this paper is to investigate the approximate controllability of the following semilinar control systems involving Riemann-Liouville fractional derivatives:
\begin{eqnarray}
&&_{0}D_{t}^{\beta} z(t) = Az(t) + h(t,z(t)), ~~~~~ t\in \cup_{j=0}^{m} (u_j, t_{j+1}], \label{system}\\ \nonumber
&&z(t)= \varphi_j(t,z(t_{j}^{-})), ~~~ t\in (t_j, u_j],~~~ j=1,2,\ldots,m,\\ \nonumber
&&_{u_j}I_{t}^{2-\beta}z(t)|_{t=u_j} = \frac{1}{u_j-t_j}\int_{t_j}^{u_j} \varphi_j(r,z(t_{j}))dr,~~~ j=1,2,\ldots,m,\\ \nonumber
&&_{0}I_{t}^{2-\beta}z(t)|_{t=0} = z_{0} \in Z, \\ \nonumber 
&&_{u_j}D_{t}^{\beta-1}z(t)|_{t=u_j} = \widetilde{z}_j \in Z, ~~~ j=0,1,\ldots,m,\\ \nonumber
%&&_{0}D_{t}^{\beta-1}z(t)|_{t=0} = z_1 \in Z
\end{eqnarray}
where $_{0}D_{t}^{\beta}$ stands for the Riemann-Liouville fractional derivative of order $\beta \in (1,2)$ with fixed lower limit as 0. For each fixed $t$, $z(t)$ belong to Banach space $Z$. $ A:D(A)\subseteq Z\rightarrow Z$ generates Riemann-Liouville $ \beta$- order fractional resolvent $\mathcal{R}_\beta (t)$. 
%$B: L^q([0,a];V) \rightarrow L^q([0,a];Z)$ is a linear map. $h$ is a function from $[0,a] \times Z$ to $Z$.
The points $u_j$ and $t_j$ satisfy the relation $0 = u_0<t_1 < u_1 < t_2 < ... < u_m < t_{m+1} = u_{m+1} = a$. The impulses start at points $t_j, ~j = 1,2,\ldots,m$ and continue for the interval $(t_j,u_j]$. For $j=1,2,\ldots,m$, $ \varphi_j$ are the impulsive functions to be discussed later. $z(t_j)=z(t_j^-) = lim_{\triangle \rightarrow 0^+}z(t_j - \triangle) ~\text{and} ~z(t_j^+) = lim_{\triangle \rightarrow 0^+}z(t_j + \triangle)$ denotes the left and right hand limit of $z(t)$ at $t_j$ for $j=1,2,\ldots,m$ respectively. \\
%\indent \textcolor{blue}{Principal contributions of this artefact are:
%\begin{itemize}
%\item 
%Detailed existence analysis for the rarely studied non-instantaneous impulsive systems governed by Riemann-Liouville fractional derivatives is performed. 
%\item Correct integral type initial conditions are defined at starting and end points of the impulses. 
%\item The definition of mild solution is derived in terms of fractional resolvent $\mathcal{R}_{\beta}(t)$ paying attention towards correct lower limits of the Riemann-Liouville derivatives involved.
%\item Existence and uniqueness result for the mild solution of the concerned system is established under sufficient assumptions through Banach's fixed point theorem. 
%\item An example is provided at the end incorporating particular values of the input parameters to clearly illustrate the assumptions and methodology used. 
%\end{itemize}}

%%\indent This artefact is drafted as: Section \ref{2} introduces the preliminary concepts and definitions serving as the base for further study. Mild solution of the concerned system is constructed in terms of fractional resolvent in Section \ref{3}. Results for the existence and uniqueness of mild solutions based on appropriate sufficient assumptions are apparent in Section \ref{4}. Section \ref{5} comprises of an example to illustrate the methodology used. The article is winded up with concluding remarks and future scope in Section \ref{6}. 

\section{Preliminary facts} \label{2}
This segment introduces some definitions and fundamental concepts that will aid in the study of the artefact.

%This segment provides a quick referral to some fundamental concepts and definitions which are beneficial for the smooth study of the paper.\\
\indent Consider the Banach space 
\begin{eqnarray*}
 PC_{2-\beta}([0,a];Z) = &&\Big{ \{ }  z:z \in C \Big( \big( \cup_{j=0}^{m}(u_j,t_{j+1}) \big) \cup \big(\cup_{r=1}^{m}(t_j,u_j)\big); Z \Big),\\
 %PC_{1-\beta}([0,a];Z) = \{ z: (t-u_j)^{1-\beta}z(t) ~\text{is piecewise continuous on}~ [0,a],\\
&&~  z(u_j) = z(u_{j}^{-}) = lim_{\triangle \rightarrow 0^+} z(u_j - \triangle ) < \infty,~ j=1,2,\ldots,m,\\
&&~ (t-u_j)^{2-\beta}\|z(t)\|<\infty,~ \text{for}~ t\in (u_j,u_{j+1}], ~j=0,1,\ldots,m,\\
%&& ~ (t-p_m)^{1-\beta}\|z(t)\|<\infty,~ \text{for}~ t\in (p_m,t_{m+1}], \\
&&~ z(t_j) = z(t_j^-) = lim_{\triangle \rightarrow 0^+} z(t_j - \triangle ) < \infty, ~ j = 1,2,\ldots,m\Big{ \} }.
\end{eqnarray*} 
\indent Introduce the norm $\|z\|_{[0,a]}$ on $PC_{2-\beta}([0,a];Z)$ as 
%\begin{eqnarray*}
$\|z\|_{[0,a]}=\text{max}_{j=0,1,\ldots,m} \|z\|_j$
where
$\|z\|_j = \text{sup}_{t\in (u_j,u_{j+1}]} (t-u_j)^{2-\beta}\|z(t)\| ~\text{for}~ j=0,1,\ldots,m $.
% Clearly for $r=m$, the interval $(p_m,p_{m+1}]$ is nothing but $(p_m,t_{m+1}]$, i.e. $p_{m+1} = a = t_{m+1}$.
%and $\|z\|_m = \text{sup}_{t\in (p_m,a]}(t-p_m)^{1-\beta}\|z(t)\|$.

%Let for $t \geq 0$, $C_0-$ semigroup $T(t)$ has $ M $ as $ M = \text{sup}_{t \in [0,a]} \|T(t)\| < \infty $.\\
%\end{eqnarray*}
\begin{remark}
$PC_{2-\beta}([0,a];Z)$ is  a dense subset of $L^q([0,a];Z)$ if $q<\frac{1}{2-\beta}$.
\end{remark}
 Throughout this article, it is assumed that a constant $\lambda_\mathcal{R}$ exists such that $\|t^{2-\beta}\mathcal{R}_\beta(t)\| \leq \lambda_\mathcal{R} $ and $\tau = \underset{j= 0,1,\ldots,m}{max} (t_{j+1} - u_j)$.\\
\indent Some definitions related to fractional integrals and derivatives are :
\begin{definition} 
The Riemann-Liouville $\beta^{th}$-order fractional integral is written in terms of the following integral
\begin{equation*}
_{t_0}I^{\beta}_{t}z(t)=\frac{1}{\Gamma \left( \beta \right)} \int_{t_0}^{t} (t-r)^{\beta - 1}z(r)dr, \hspace{6mm} \beta > 0,
\end{equation*}
where $\Gamma$ denotes the gamma function. 
\end{definition}

\begin{definition}The fractional $\beta^{th}$-order Riemann-Liouville derivative is defined by the following expression
\begin{equation*}
_{t_0}D^{\beta}_{t}z(t)=\frac{1}{\Gamma \left( n - \beta \right)} {\Bigg{(} \frac{d}{dt} \Bigg{)}}^n \int_{t_0}^{t} (t-r)^{n- \beta - 1}z(r)dr,
\end{equation*}
where $0 \leq n-1 < \beta < n$.
\end{definition}
\begin{definition}
A function of the complex variable $w$ defined by
\begin{eqnarray}
E_{\beta, \beta} (w) = \sum_{i=0}^{\infty} \dfrac{w^i}{\Gamma(\beta i + \beta)} \nonumber
\end{eqnarray}
is known as the two parameter Mittag-Leffler series.
\end{definition}

\begin{lemma}{\cite{kilbas:srivast}} For $\beta > 0$, $k = \lfloor \beta \rfloor + 1 $ and $_{t_0}I_{t}^{k-\beta}z(t)=z_{k-\beta}(t)$, the following equality holds:
\begin{eqnarray*}
_{t_0}I_{t}^{\beta}{_{t_0}D_{t}^{\beta}z(t)}=z(t)-\sum_{i=1}^{k}\dfrac{z_{k-\beta}^{k-i}(t_0)}{\Gamma(\beta-i+1)}{(t-t_0)}^{\beta-i}.
\end{eqnarray*}
%\begin{itemize}
%\item[(i)] If $f \in C(t_0,a]$, then for any point $t\in (t_0,a]$ $$_{t_0}D_{t}^{\beta}(_{t_0}I_{t}^{\beta}f(t)) = f(t).$$
%\item[(ii)] If $f \in C(t_0,a]$ and $_{t_0}I_{t}^{1-\beta}f(t) \in C(t_0,a]$, then for any point $t \in (t_0,a]$ $$_{t_0}I_{t}^{\beta}(_{t_0}D_{t}^{\beta}f(t)) = f(t) - \dfrac{_{t_0}I_{t}^{1-\beta}f(t)|_{t=t_0}}{\Gamma(\beta)}(t-t_0)^{\beta-1}.$$
%\end{itemize}
\end{lemma}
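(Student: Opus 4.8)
The plan is to derive the identity from scratch using two ingredients: a fractional integration-by-parts formula and the index (semigroup) law for Riemann--Liouville operators. Set $g(t) := z_{k-\beta}(t) = {}_{t_0}I_t^{k-\beta}z(t)$ with $k = \lfloor\beta\rfloor+1$; then, directly from the definition of the Riemann--Liouville derivative, ${}_{t_0}D_t^{\beta}z(t) = g^{(k)}(t)$, so the whole problem reduces to evaluating ${}_{t_0}I_t^{\beta}\,g^{(k)}(t)$.

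The first step is to prove the one-step reduction
\[
{}_{t_0}I_t^{\gamma}\,h'(t) \;=\; {}_{t_0}I_t^{\gamma-1}h(t) \;-\; \frac{h(t_0)}{\Gamma(\gamma)}(t-t_0)^{\gamma-1},
\]
which follows by integrating by parts in $\frac{1}{\Gamma(\gamma)}\int_{t_0}^{t}(t-r)^{\gamma-1}h'(r)\,dr$: the boundary contribution at $r=t$ vanishes, the one at $r=t_0$ produces the subtracted power, and the remaining integral is recognized as ${}_{t_0}I_t^{\gamma-1}h$. When $\gamma\le 1$ this is read with the convention ${}_{t_0}I_t^{-\mu}={}_{t_0}D_t^{\mu}$. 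Next I would iterate this formula $k$ times, starting from ${}_{t_0}I_t^{\beta}g^{(k)}$ and taking $\gamma = \beta,\beta-1,\dots,\beta-k+1$ in turn, to obtain
\[
{}_{t_0}I_t^{\beta}\,g^{(k)}(t) \;=\; {}_{t_0}I_t^{\beta-k}g(t) \;-\; \sum_{i=1}^{k}\frac{g^{(k-i)}(t_0)}{\Gamma(\beta-i+1)}(t-t_0)^{\beta-i}.
\]
Since $\beta-k \in (-1,0]$, the index law gives ${}_{t_0}I_t^{\beta-k}g = {}_{t_0}I_t^{\beta-k}\,{}_{t_0}I_t^{k-\beta}z = {}_{t_0}I_t^{0}z = z$ (for $\beta-k<0$ this is exactly the left-inverse property ${}_{t_0}D_t^{k-\beta}\,{}_{t_0}I_t^{k-\beta}z = z$), and substituting $g^{(k-i)}(t_0) = z_{k-\beta}^{k-i}(t_0)$ yields the claimed equality.

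I expect the only real difficulty to be the regularity bookkeeping rather than the algebra. The integration-by-parts step and the index law are immediate when $\beta>k$ or when $z$ is smooth, but for $\beta\in(1,2)$ with $z$ merely in $PC_{2-\beta}([0,a];Z)$ the kernels $(t-r)^{\gamma-1}$ with $\gamma\le 1$ are non-integrable near $r=t$, so the boundary term and the residual integral are individually divergent and only their combination is meaningful. The standard remedy, and the one I would follow (it is the route of \cite{kilbas:srivast}), is to carry out the computation first on a dense class of sufficiently smooth $z$ via the density observation of Remark~2.1, and then extend by continuity; alternatively one treats the ``negative-order integrals'' as Riemann--Liouville derivatives from the outset and invokes their known mapping and composition properties. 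Everything else is a routine induction on $k$.
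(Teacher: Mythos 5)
The paper does not actually prove this lemma: it is quoted verbatim from \cite{kilbas:srivast}, so your argument can only be compared with the classical one in that reference. Your derivation is correct in substance and reaches the stated identity: writing ${}_{t_0}D_t^{\beta}z=g^{(k)}$ with $g={}_{t_0}I_t^{k-\beta}z$, iterating the one-step reduction ${}_{t_0}I_t^{\gamma}h'={}_{t_0}I_t^{\gamma-1}h-\frac{h(t_0)}{\Gamma(\gamma)}(t-t_0)^{\gamma-1}$ for $\gamma=\beta,\dots,\beta-k+1$, and collapsing ${}_{t_0}I_t^{\beta-k}\,{}_{t_0}I_t^{k-\beta}z=z$ gives exactly the claimed sum, and you correctly isolate the one delicate point, namely that for $\gamma\le 1$ the boundary term and the residual integral in the integration by parts are individually divergent. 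The textbook route is genuinely different at that point and avoids the difficulty altogether: one assumes $g\in AC^{k}$, writes the Taylor expansion with integral remainder $g=\sum_{j=0}^{k-1}\frac{g^{(j)}(t_0)}{j!}(\cdot-t_0)^{j}+{}_{t_0}I_t^{k}g^{(k)}$, applies ${}_{t_0}D_t^{k-\beta}$ to both sides, and uses the left-inverse property ${}_{t_0}D_t^{k-\beta}\,{}_{t_0}I_t^{k-\beta}z=z$, the composition ${}_{t_0}D_t^{k-\beta}\,{}_{t_0}I_t^{k}={}_{t_0}I_t^{\beta}$, and the power-function formula of Proposition 2.1 to evaluate the derivative of the Taylor polynomial term by term; no singular boundary terms ever appear. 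Your proposed repair by density (Remark 2.1) is the weaker of your two alternatives: convergence in $L^{q}$ does not control the pointwise initial values $g^{(k-i)}(t_0)$ that appear on the right-hand side, so the limit passage is not automatic. Your second alternative --- imposing $z_{k-\beta}\in AC^{k}$ and invoking the known composition properties of the negative-order operators --- is the sound one, and is precisely the hypothesis under which the cited reference states and proves the lemma (a hypothesis the paper's statement silently omits).
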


\begin{proposition}{\cite{podlubny}}
The underneath hold true:
% for $ \alpha > 0$, $0<\beta<1$ :
\begin{itemize}
\item[(i)]  For $ \alpha > 0$, $0<\beta<1$, 
\begin{eqnarray*}
_{t_0}I_{t}^{\beta}(t-t_0)^{\alpha-1} &=& \dfrac{\Gamma(\alpha)}{\Gamma(\alpha + \beta)}(t-t_0)^{\alpha + \beta - 1}, \\
_{t_0}D_{t}^{\beta}(t-t_0)^{\alpha -1} &=& \dfrac{\Gamma(\alpha)}{\Gamma(\alpha-\beta)}(t-t_0)^{\alpha - \beta - 1}.
\end{eqnarray*}
\item[(ii)] For $0<\beta <1$,
\begin{eqnarray*}
_{t_0}I_{t}^{\beta}(t-t_0)^{-\beta} &=& \Gamma(1-\beta),\\
_{t_0}D_{t}^{\beta}(t-t_0)^{\beta-1}&=&0.
\end{eqnarray*}
\end{itemize}
\end{proposition}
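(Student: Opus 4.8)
The plan is to verify each of the four identities directly from the definitions of the Riemann--Liouville fractional integral and derivative given above, reducing everything to the Euler Beta integral.

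First I would handle the integral formula in (i). Substituting $z(r)=(r-t_0)^{\alpha-1}$ into the definition of $_{t_0}I^{\beta}_{t}$ gives $\frac{1}{\Gamma(\beta)}\int_{t_0}^{t}(t-r)^{\beta-1}(r-t_0)^{\alpha-1}\,dr$. The change of variables $r=t_0+s(t-t_0)$ with $s\in[0,1]$ turns this into $\frac{(t-t_0)^{\alpha+\beta-1}}{\Gamma(\beta)}\int_0^1 s^{\alpha-1}(1-s)^{\beta-1}\,ds$. Because $\alpha>0$ and $\beta>0$, the integral converges and equals $B(\alpha,\beta)=\Gamma(\alpha)\Gamma(\beta)/\Gamma(\alpha+\beta)$, which produces the asserted expression $\frac{\Gamma(\alpha)}{\Gamma(\alpha+\beta)}(t-t_0)^{\alpha+\beta-1}$.

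Next, for the derivative formula in (i), note that since $0<\beta<1$ the integer $n$ in the definition of $_{t_0}D^{\beta}_{t}$ equals $1$, so $_{t_0}D^{\beta}_{t}=\frac{d}{dt}\,{}_{t_0}I_{t}^{1-\beta}$. Applying the integral formula just proved with $\beta$ replaced by $1-\beta$ (legitimate since $1-\beta>0$) yields $_{t_0}I_{t}^{1-\beta}(t-t_0)^{\alpha-1}=\frac{\Gamma(\alpha)}{\Gamma(\alpha-\beta+1)}(t-t_0)^{\alpha-\beta}$; differentiating in $t$ and using $\Gamma(\alpha-\beta+1)=(\alpha-\beta)\Gamma(\alpha-\beta)$ gives $\frac{\Gamma(\alpha)}{\Gamma(\alpha-\beta)}(t-t_0)^{\alpha-\beta-1}$. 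Finally, part (ii) follows by specialization: taking $\alpha=1-\beta$ (admissible since $\beta<1$) in the integral formula gives $_{t_0}I_{t}^{\beta}(t-t_0)^{-\beta}=\frac{\Gamma(1-\beta)}{\Gamma(1)}(t-t_0)^{0}=\Gamma(1-\beta)$, and taking $\alpha=\beta$ in the derivative formula gives $_{t_0}D^{\beta}_{t}(t-t_0)^{\beta-1}=\frac{\Gamma(\beta)}{\Gamma(0)}(t-t_0)^{-1}=0$ under the usual convention that $1/\Gamma$ vanishes at non-positive integers.

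The argument is essentially routine; there is no serious obstacle. The only points needing care are that the Beta integral converges precisely under the stated hypotheses $\alpha>0$, $\beta>0$ (respectively $1-\beta>0$ in the derivative step), and that the derivative identity must be read with the convention $1/\Gamma(0)=0$, which is exactly the mechanism that forces $_{t_0}D^{\beta}_{t}(t-t_0)^{\beta-1}=0$.
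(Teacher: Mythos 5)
Your argument is correct: the paper states this proposition without proof, simply citing Podlubny, and your verification---reducing the fractional integral of a power to the Euler Beta integral, writing $_{t_0}D^{\beta}_{t}=\frac{d}{dt}\,{}_{t_0}I_{t}^{1-\beta}$ for $0<\beta<1$, and obtaining part (ii) by the specializations $\alpha=1-\beta$ and $\alpha=\beta$---is exactly the standard derivation found in that reference. The one point worth making explicit is that in the case $\alpha=\beta$ you need not invoke the convention $1/\Gamma(0)=0$ as an extra axiom: your own computation shows $_{t_0}I_{t}^{1-\beta}(t-t_0)^{\beta-1}=\Gamma(\beta)$ is constant, so its derivative vanishes directly.
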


\begin{definition}{\cite{fracresolvent}}
A family $\{ \mathcal{R}_\beta(t)| t> 0 \} \subset \mathcal{L}(Z)$ is called Riemann-Liouville $\beta$-order fractional resolvent if the underneath hold true:
\begin{itemize}
\item[(i)] for any $z \in Z$, $\mathcal{R}_\beta(\cdot)z \in C((0, \infty);Z)$ and 
$$\lim_{t \rightarrow 0^+} \Gamma (\beta - 1) t^{2-\beta} \mathcal{R}_\beta(t)z=z,$$
\item[(ii)] for $r,t > 0$, $\mathcal{R}_\beta(r)$ and $\mathcal{R}_\beta(t)$ commute, 
\item[(iii)] for $r,t >0$, 
$$\mathcal{R}_\beta(r) I_{t}^{\beta}\mathcal{R}_\beta(t)-I_{r}^{\beta}\mathcal{R}_\beta(r)\mathcal{R}_\beta(t) = \frac{r^{\beta-2}}{\Gamma(\beta-1)}I_{t}^{\beta} \mathcal{R}_\beta(t) - \frac{t^{\beta-2}}{\Gamma(\beta-1)}I_{r}^{\beta} \mathcal{R}_\beta(r).$$
\end{itemize}
\end{definition}

\begin{definition}
%{\cite{fracresolvent}}
The operator $A$, generator of Riemann-Liouville $\beta$-order fractional resolvent $\mathcal{R}_\beta(t)$ and its domain $D(A)$ are defined respectively as:
\begin{eqnarray*}
Az=\Gamma(2\beta-1) \lim_{t\rightarrow 0^+}\frac{t^{2-\beta} \mathcal{R}_\beta(t)z-\frac{1}{\Gamma(\beta-1)}z}{t^\beta} ~\mbox{for} ~z\in D(A),
\end{eqnarray*}
and
\begin{eqnarray*}
D(A)= \bigg\{ z \in Z: \lim_{t \rightarrow 0^+} \frac{t^{2-\beta} \mathcal{R}_\beta(t)z-\frac{1}{\Gamma(\beta-1)}z}{t^\beta} \mbox{exists} \bigg\}.
\end{eqnarray*}
\end{definition}

\begin{proposition}{\cite{fracresolvent}}
Let $\{ \mathcal{R}_\beta(t)| t> 0 \}$ be the Riemann-Liouville $\beta$-order fractional resolvent with $A$ being its generator, then :
\begin{itemize}
\item[(i)] For each $z\in D(A)$,
$$\mathcal{R}_\beta(t)z \in D(A) ~~\text{and}~~ A\mathcal{R}_\beta(t)z=\mathcal{R}_\beta(t)Az, $$
\item[(ii)]   for $t>0$, $z\in Z$, $$\mathcal{R}_\beta(t)z=\frac{t^{\beta-2}}{\Gamma(\beta-1)z}+AI_{t}^{\beta}\mathcal{R}_\beta(t)z,$$ 
\item[(iii)] for $t>0$, $z\in D(A)$, $$\mathcal{R}_\beta(t)z=\frac{t^{\beta-2}}{\Gamma(\beta-1)z}+I_{t}^{\beta}\mathcal{R}_\beta(t)Az,$$
\item[(iv)] the operator A is closed and densely defined. 
\end{itemize}
\end{proposition}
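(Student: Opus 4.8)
\bigskip
\noindent\textbf{Proof proposal.}
The plan is to read all four items off the three structural axioms of Definition~2.4, the generator formula of Definition~2.5, the standing bound $\|t^{2-\beta}\mathcal{R}_\beta(t)\|\le\lambda_{\mathcal{R}}$, and one auxiliary asymptotic proved first: setting $k(t):=t^{\beta-2}/\Gamma(\beta-1)$, for every $w\in Z$
\begin{equation*}
\Gamma(2\beta-1)\,t^{\,2-2\beta}\;{}_{0}I_{t}^{\beta}\mathcal{R}_\beta(t)w\ \longrightarrow\ w\qquad(t\to 0^{+}).\tag{$\star$}
\end{equation*}
For $(\star)$ I would substitute $s=t\sigma$ in ${}_{0}I_{t}^{\beta}\mathcal{R}_\beta(t)w=\frac{1}{\Gamma(\beta)}\int_{0}^{t}(t-s)^{\beta-1}\mathcal{R}_\beta(s)w\,ds$ and factor the integrand as $\frac{t^{2\beta-2}}{\Gamma(\beta)}(1-\sigma)^{\beta-1}\sigma^{\beta-2}\big[(t\sigma)^{2-\beta}\mathcal{R}_\beta(t\sigma)w\big]$; the bracket tends to $w/\Gamma(\beta-1)$ by Definition~2.4(i) and is bounded by $\lambda_{\mathcal{R}}\|w\|$, so dominated convergence (the weight $(1-\sigma)^{\beta-1}\sigma^{\beta-2}$ is integrable on $(0,1)$ precisely because $\beta\in(1,2)$) together with $\int_{0}^{1}(1-\sigma)^{\beta-1}\sigma^{\beta-2}\,d\sigma=\Gamma(\beta)\Gamma(\beta-1)/\Gamma(2\beta-1)$ yields $(\star)$. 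I would also record the elementary identity $t^{\,2-2\beta}[\mathcal{R}_\beta(t)w-k(t)w]=\big(t^{2-\beta}\mathcal{R}_\beta(t)w-\tfrac{1}{\Gamma(\beta-1)}w\big)/t^{\beta}$, so that the quotient in Definition~2.5 is just $t^{\,2-2\beta}[\mathcal{R}_\beta(t)w-k(t)w]$.

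For (i), fix $z\in D(A)$ and apply the bounded operator $\mathcal{R}_\beta(t)$ to the quotient defining $Az$; by the commutativity of Definition~2.4(ii), $\mathcal{R}_\beta(t)\,r^{\,2-2\beta}[\mathcal{R}_\beta(r)z-k(r)z]=r^{\,2-2\beta}[\mathcal{R}_\beta(r)(\mathcal{R}_\beta(t)z)-k(r)(\mathcal{R}_\beta(t)z)]$, which is exactly the quotient for $\mathcal{R}_\beta(t)z$; letting $r\to0^{+}$ and using continuity of $\mathcal{R}_\beta(t)$ shows this converges to $\mathcal{R}_\beta(t)Az/\Gamma(2\beta-1)$, so $\mathcal{R}_\beta(t)z\in D(A)$ and $A\mathcal{R}_\beta(t)z=\mathcal{R}_\beta(t)Az$. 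For (ii), fix $z\in Z$, set $v:={}_{0}I_{t}^{\beta}\mathcal{R}_\beta(t)z$, solve Definition~2.4(iii) for $\mathcal{R}_\beta(r)v$ and subtract $k(r)v$ to obtain $\mathcal{R}_\beta(r)v-k(r)v={}_{0}I_{r}^{\beta}\mathcal{R}_\beta(r)\mathcal{R}_\beta(t)z-k(t)\,{}_{0}I_{r}^{\beta}\mathcal{R}_\beta(r)z$; multiplying by $r^{\,2-2\beta}$, the left side becomes the quotient defining $Av$, while by $(\star)$ (with $w=\mathcal{R}_\beta(t)z$ and $w=z$) the right side tends to $\Gamma(2\beta-1)^{-1}[\mathcal{R}_\beta(t)z-k(t)z]$ as $r\to0^{+}$. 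Hence $v\in D(A)$ and $Av=\mathcal{R}_\beta(t)z-k(t)z$, i.e.\ $\mathcal{R}_\beta(t)z=k(t)z+A\,{}_{0}I_{t}^{\beta}\mathcal{R}_\beta(t)z$.

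For (iii), fix $z\in D(A)$ and let $T:={}_{0}I_{t}^{\beta}\mathcal{R}_\beta(t)$; it is bounded on $Z$ since $\|Tu\|\le\frac{\lambda_{\mathcal{R}}}{\Gamma(\beta)}\|u\|\int_{0}^{t}(t-s)^{\beta-1}s^{\beta-2}\,ds<\infty$, and it commutes with every $\mathcal{R}_\beta(r)$ (by Definition~2.4(ii), pulling the bounded $\mathcal{R}_\beta(r)$ out of the fractional integral). Applying $T$ to $Az=\Gamma(2\beta-1)\lim_{r\to0^{+}}r^{\,2-2\beta}[\mathcal{R}_\beta(r)z-k(r)z]$ and using the continuity of $T$ and this commutativity gives $TAz=\Gamma(2\beta-1)\lim_{r\to0^{+}}r^{\,2-2\beta}[\mathcal{R}_\beta(r)(Tz)-k(r)(Tz)]$; the quantity under the limit is the quotient defining $Tz$, which lies in $D(A)$ by (ii), so the limit exists and $TAz=A(Tz)=\mathcal{R}_\beta(t)z-k(t)z$ (again by (ii)), i.e.\ $\mathcal{R}_\beta(t)z=k(t)z+{}_{0}I_{t}^{\beta}\mathcal{R}_\beta(t)Az$. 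For (iv), if $z_{n}\in D(A)$, $z_{n}\to z$, $Az_{n}\to y$, apply (iii) to each $z_{n}$ and let $n\to\infty$ (both $\mathcal{R}_\beta(t)$ and ${}_{0}I_{t}^{\beta}\mathcal{R}_\beta(t)$ are bounded) to get $\mathcal{R}_\beta(t)z-k(t)z={}_{0}I_{t}^{\beta}\mathcal{R}_\beta(t)y$ for all $t>0$; then the quotient defining $Az$ equals $t^{\,2-2\beta}\,{}_{0}I_{t}^{\beta}\mathcal{R}_\beta(t)y$, which tends to $y/\Gamma(2\beta-1)$ by $(\star)$, so $z\in D(A)$ with $Az=y$ and $A$ is closed. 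Density is immediate: ${}_{0}I_{t}^{\beta}\mathcal{R}_\beta(t)z\in D(A)$ by (ii), and $\Gamma(2\beta-1)t^{\,2-2\beta}\,{}_{0}I_{t}^{\beta}\mathcal{R}_\beta(t)z\to z$ by $(\star)$.

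The step needing the most care is $(\star)$ and its repeated use: one must confirm that the weight $\sigma^{\beta-2}$ (equivalently $s^{\beta-2}$), while singular at the origin, is genuinely integrable against $(t-s)^{\beta-1}$ — which is exactly what the hypothesis $\beta\in(1,2)$ buys — justify the dominated-convergence passage, and, in (i)--(iii), keep track that every appeal to $(\star)$ is applied to a fixed vector while $\mathcal{R}_\beta(t)$, ${}_{0}I_{t}^{\beta}\mathcal{R}_\beta(t)$ and multiplication by $k(t)$ are bounded, hence interchangeable with the relevant limits. The remainder is routine manipulation of the axioms.
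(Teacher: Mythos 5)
The paper states this proposition without proof, simply importing it from the cited reference, so there is no in-text argument to compare against; judged on its own terms, your reconstruction is correct and is essentially the standard argument of that reference: the scaling substitution $s=t\sigma$ plus dominated convergence and the Beta integral give $(\star)$, and items (i)--(iv) then follow from the commutativity axiom, the functional equation in Definition~2.4(iii), and boundedness of $\mathcal{R}_\beta(t)$ and $I_t^\beta\mathcal{R}_\beta(t)$ exactly as you describe. You also correctly read the displayed formulas in (ii)--(iii) as $\frac{t^{\beta-2}}{\Gamma(\beta-1)}z$ rather than the literal (typographically garbled) $\frac{t^{\beta-2}}{\Gamma(\beta-1)z}$. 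No gaps.
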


\section{Construction of mild solution} \label{3}
In order to construct the mild solution $z(t)$ for the system (\ref{system}), we proceed with the discussion below:\\
\textbf{Case 1:} For $t\in (0,t_1]$ (see {\cite{abdul higher order}}),
\begin{eqnarray*}
z(t) = \mathcal{R}_\beta(t)z_0 + \int_{0}^{t}\mathcal{R}_\beta(r)\widetilde{z}_0dr + \int_{0}^{t} \int_{0}^{t-r}\mathcal{R}_\beta(\phi) h(r,z(r))d\phi dr.
%\frac{t^{\beta-2}}{\Gamma(\beta-1)}z_0 + \frac{t^{\beta-1}}{\Gamma(\beta)}z_1 + A _{0}I_{t}^{\beta}
%t^{\beta-1}T_{\beta}(t)z_0 + \int_{0}^{t}(t-s)^{\beta-1}T_{\beta}(t-s)\big{[}Bu(s)+ h(s,z(s))\big{]}ds.
\end{eqnarray*}
\textbf{Case 2:} For $t \in (t_1,u_1]$,
\begin{eqnarray*}
z(t) &=& \varphi_1(t,z(t_1^-)) \nonumber\\
&=& \varphi_1\bigg{(}t, \mathcal{R}_\beta(t_1)z_0 + \int_{0}^{t_1}\mathcal{R}_\beta(r)\widetilde{z}_0dr + \int_{0}^{t_1} \int_{0}^{t_1-r}\mathcal{R}_\beta(\phi) h(r,z(r))d\phi dr\bigg{)}
\end{eqnarray*}
\textbf{Case 3:} For $t \in (u_1,t_2]$,
\begin{align*}
_{0}D_{t}^{\beta}z(t) ={}& \dfrac{1}{\Gamma(2-\beta)} \dfrac{d^2}{dt^2} \int_{0}^{t}(t-r)^{2-\beta - 1} z(r) dr\\
={}& \dfrac{1}{\Gamma(2-\beta)} \dfrac{d^2}{dt^2} \int_{0}^{t_1}(t-r)^{1-\beta} z(r) dr + \dfrac{1}{\Gamma(2-\beta)} \dfrac{d^2}{dt^2} \int_{t_1}^{u_1}(t-r)^{1-\beta} \varphi_1(r,z(t_1^-)) dr\\
&\quad+ \dfrac{1}{\Gamma(2-\beta)} \dfrac{d^2}{dt^2} \int_{u_1}^{t}(t-r)^{1-\beta} z (r) dr\\
={}& \dfrac{\beta (\beta-1)}{\Gamma(2-\beta)}\int_{0}^{t_1}\dfrac{z(r)}{(t-r)^{1+\beta}}dr + \dfrac{\beta (\beta -1)}{\Gamma(2-\beta)}\int_{t_1}^{u_1}\dfrac{\varphi_1(r,z(t_1^-))}{(t-r)^{1+\beta}}dr +~ _{u_1}D_{t}^{\beta}z(t)\\
={}&  \Phi_1(t,z(t)) +~ _{u_1}D_{t}^{\beta}z(t)
\end{align*}
Thus,
\begin{eqnarray}
&&_{u_1}D_{t}^{\beta}z(t) =  Az(t) + h(t,z(t)) -  \Phi_1(t,z(t)) \nonumber\\
&&_{u_1}I_{t}^{2-\beta}z(t)|_{t=u_1} = \frac{1}{u_1-t_1}\int_{t_1}^{u_1} \varphi_1(r,z(t_{1}))dr \nonumber \\
&&_{u_1}D_{t}^{\beta-1}z(t)|_{t=u_1} = \widetilde{z}_1 \nonumber
\end{eqnarray}
Hence, for $t \in (u_1,t_2]$
\begin{eqnarray*}
z(t) &=& \frac{\mathcal{R}_\beta(t-u_1)}{u_1-t_1}\int_{t_1}^{u_1}\varphi_1(r,z(t_1))dr + \int_{u_1}^{t}\mathcal{R}_\beta(r-u_1)\widetilde{z}_1dr\nonumber\\
&&~~+ \int_{u_1}^{t} \int_{0}^{t-r}\mathcal{R}_\beta(\phi)\big(h(r,z(r))-\Phi_1(r,z(r))\big)d\phi dr.
\end{eqnarray*}
\indent Continuing this process for each $j=2,3,\ldots,m$ and taking lower limit of Riemann-Liouville derivative as $u_j$ for each interval $(u_j,t_{j+1}]$, we define the mild solution as follows:

\begin{definition}
A function $z \in PC_{2-\beta}([0,a];Z)$ is called a mild solution of the system (\ref{system}) if it satisfies the following integral equation:
\begin{eqnarray}
z(t) = 
\begin{cases}
      \mathcal{R}_\beta(t)z_0 + \int_{0}^{t}\mathcal{R}_\beta(r)\widetilde{z}_0dr + \int_{0}^{t} \int_{0}^{t-r}\mathcal{R}_\beta(\phi)h(r,z(r))d\phi dr ~~~~~~\text{for} \ t \in (0,t_1],\\
      \varphi_j(t,z(t_{j}^{-})),  \hspace{2cm}\text{for} \ t \in (t_j,u_j], ~~~ j=1,2,\ldots,m \\
    \frac{\mathcal{R}_\beta(t-u_j)}{u_j-t_j}\int_{t_j}^{u_j}\varphi_j(r,z(t_j))dr + \int_{u_j}^{t}\mathcal{R}_\beta(r-u_j)\widetilde{z}_jdr + \int_{u_j}^{t} \int_{0}^{t-r}\mathcal{R}_\beta(\phi)\big( h(r,z(r)) \nonumber \\
 -\Phi_j(r,z(r))\big)d\phi dr  \hspace{2cm} \text{for}\ t\in (u_j,t_{j+1}], ~~j=1,\ldots,m 

     %& \text{for}\ t\in (p_i,t_{i+1}], ~~~i = 0,1,2...,n 

    \end{cases}
 \label{mildsol}
\end{eqnarray}
where
\begin{eqnarray}
%&\varphi_0 = z_0 , \nonumber \\
%\nonumber \\
%&\Phi_0 = 0,   \label{phidef}\\
%\nonumber \\
&\Phi_j(t,z(t)) = \dfrac{\beta (\beta -1)}{\Gamma(2-\beta)}\displaystyle{\sum_{k=0}^{j-1}}\bigg{(} \displaystyle{\int_{u_k}^{t_{k+1}}}\frac{z(r)}{(t-r)^{1+\beta}} dr + \displaystyle{\int_{t_{k+1}}^{u_{k+1}}} \dfrac{\varphi_{k+1}(r,z(t_{k+1}))}{(t-r)^{1+ \beta}} dr \bigg{)},  \label{phidef}
\end{eqnarray}
for $ j=1,2,...,m$.
% and
%\begin{eqnarray}
%T_{\beta}(t)&=&\beta \int_{0}^{\infty} \theta \xi_{\beta}(\theta)T(t^{\beta} \theta)d\theta ,\\
%\xi_{\beta}(\theta)&=&\dfrac{1}{\beta} \theta^{-1-\frac{1}{\beta}}\varpi_\beta{(\theta^{-\frac{1}{\beta}})}, \nonumber\\
%\varpi_\beta{(\theta)}&=& \dfrac{1}{\pi}{\sum_{n=1}^{\infty}}(-1)^{n-1}{\theta}^{-n\beta-1}\frac{\Gamma(n\beta + 1)}{n!}sin(n\pi \beta), \hspace{4mm} \theta \in (0,\infty), \nonumber
%\end{eqnarray}
%and $\xi_{\beta}$ is a probability density function defined on $(0,\infty)$, that is,\\
%
%$\xi_{\beta}(\theta) \geq 0$\; and\; $\int_{0}^{\infty} \xi_{\beta}(\theta)d\theta = 1$.
\end{definition}

%\begin{definition}
%Let $z(t,v)$ be a mild solution of the system (\ref{system}) at time t corresponding to a control $v(\cdot)\in L^q([0,a];V)$. The fractional impulsive system (\ref{system}) is said to be approximately controllable on $[0,a]$ if there exists a control $v(\cdot)\in L^q([0,a];V)$ which steers the system (\ref{system}) from an arbitrary initial state $z_0 \in Z$ to an $\epsilon-$neighbourhood of desired final state $z_f \in Z$ at final time $t=a$, i.e., $\|z(a,v)-z_f\|_Z < \epsilon$.
%%Let $z(t,v)$ be the mild solution of the impulsive system (\ref{system}) at time $t$ corresponding to a control $v(\cdot)$
%\end{definition}
%
%\begin{definition}
%The fractional impulsive system (\ref{system}) is said to be total approximately controllable on $[0,a]$, if for desired final state $z_{f_j} \in Z$,  the system (\ref{system}) is approximately controllable on every sub-interval $[u_j,t_{j+1}]$ of $[0,a]$ for $j=0,1,\ldots,m$, i.e., $\|z(u_j,v_j)-z_{f_j}\|_Z <\epsilon$.
%\end{definition}
%\begin{definition}
%Let $z(t,v)$ be a mild solution of the system (\ref{system}) at time t corresponding to a control $v(\cdot)\in L^q([0,a];V)$. The set $K_a(h)=\lbrace z(a,v) \in Z; v(\cdot)\in L^q([0,a];V \rbrace$ is called the reachable set for final time a. If $K_a(h)$ becomes dense in Z, the system (\ref{system}) is approximately controllable on [0,a].
%\end{definition}

\section{Existence of mild solution} \label{4}
This segment establishes the existence and uniqueness of mild solution for the system (\ref{system}) utilizing the Banach fixed point theorem. The results are  based on the below mentioned assumptions:
%In order to prove existence and uniqueness of the system (\ref{system}), the following assumptions are made:
\begin{itemize}
\item[(A1)] $q < \frac{1}{2-\beta}$.
\item[(A2)] A constant $\lambda_h>0$ exists 
%in a way 
satisfying \\
$$ \|h(t,z)-h(t,y)\|_Z \leq \lambda_h \|z-y\|_Z~~\forall ~z,y \in Z. $$
%\item[(H2)] A function $\varsigma(.) ~\mbox{in}~ L^q([0,a];\mathbb{R^+}), ~q>\frac{1}{\beta}$, and a constant $d>0$ exists such that\\
%%\begin{eqnarray*}
%$$\|h(t,z)\|_Z \leq \sigma(t) + \rho (t-u_j)^{2-\beta}\|z\|_{Z}$$ for a.e. $t\in (u_j,u_{j+1}]; j=0,1,\ldots,m ~ \mbox{and}~ \forall ~ z \in Z$. 
\item[(A3)] For $j=1,2,\ldots,m$, the impulsive functions $\varphi_{j}$ defined from $[t_j,u_j] \times Z$ to $Z$ are continuous and there exist constants $\lambda_{\varphi_j} \in (0,1]$, such that \\
%\begin{eqnarray*}
$$\| \varphi_j(t,z) - \varphi_j(t,y) \|_Z \leq \lambda_{\varphi_j} \|z-y\|_Z.$$
%\end{eqnarray*}
\item[(A4)] For $j=1,2,\ldots,m$, the impulsive functions $\varphi_{j}$ defined from $[t_j,u_j] \times Z$ to $Z$ are of the order $(u_j-t)^{1+\beta}$, i.e., $$\varphi_j(t,z) = (u_j-t)^{1+\beta}\psi_j(t,z),$$ where the functions $\psi_j$ are such $\|\psi_j(t,z) - \psi_j(t,y)\|_Z \leq \lambda_{\psi_j}\|z-y\|_Z$ for all $z,y \in Z$ and $\lambda_{\psi_j} \in (0,1]$.
%The impulsive functions $\varphi_{j}(\cdot, Z)$ are differentiable on $[t_j,u_j]; j=1,2,\ldots,m$ for every $z \in Z$ and constants $\lambda_{\varphi_j^{'}} \in (0,1]$ exists satisfying 
%$$\| \varphi_j^{'}(t,z) - \varphi_j^{'} (t,y) \|_Z \leq \lambda_{\varphi_j^{'}} \|z-y\|_Z.$$
\item[(A5)] The constant $c < 1$, ~
where
\begin{eqnarray*}
c &= \dfrac{\lambda_\mathcal{R} \lambda_{\varphi_j}}{ (t_j - u_{j-1})^{2-\beta}} + \dfrac{\lambda_\mathcal{R} \lambda_h \left(\Gamma(\beta-1)\right)^2 (t_{j+1}-u_j)^{\beta}}{\Gamma(2\beta-1)} + \dfrac{\lambda_\mathcal{R}}{(\beta-1)\Gamma(2-\beta)} \sum_{k=0}^{j-1}\left( \dfrac{t_{k+1} - u_k}{u_j - t_{k+1}} \right)^{\beta-1} \nonumber \\
&+ \dfrac{\lambda_\mathcal{R}}{\beta \Gamma(2-\beta)} \sum_{k=0}^{j-2} \dfrac{\lambda_{\varphi_{k+1}}{(t_{j+1} - u_j)^2}}{{(t_{k+1} - u_k)}^{2-\beta}{(u_j - u_{k+1})}^\beta} + \dfrac{\lambda_\mathcal{R}\lambda_{\psi_j}(t_{j+1}-u_j)^2}{\Gamma(2-\beta)(t_j-u_{j-1})^{2-\beta}}. \nonumber
\end{eqnarray*}
\end{itemize}
\begin{theorem}
The nonlinear system (\ref{system}) admits a unique mild solution in $PC_{2-\beta}([0,a];Z)$ 
%for each control $v(\cdot) \in L^q([0,a];V)$ 
under the assumptions $(A1)-(A5)$.
\end{theorem}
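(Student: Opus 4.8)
The plan is to recast (\ref{system}) as a fixed point problem. Define the operator $\mathcal{F}$ on $PC_{2-\beta}([0,a];Z)$ by letting $(\mathcal{F}z)(t)$ be the right-hand side of the integral equation (\ref{mildsol}), with $\Phi_j$ as in (\ref{phidef}); by the definition of mild solution, $z$ is a mild solution of (\ref{system}) if and only if $\mathcal{F}z=z$. The proof then consists of two parts: first, that $\mathcal{F}$ maps $PC_{2-\beta}([0,a];Z)$ into itself; second, that $\mathcal{F}$ is a contraction on this space with Lipschitz constant $c<1$, where $c$ is the constant appearing in (A5). The Banach fixed point theorem then produces a unique fixed point, which is the unique mild solution.

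For the self-mapping property, I would verify on each of the three types of subinterval that $\mathcal{F}z$ meets the regularity demanded by $PC_{2-\beta}([0,a];Z)$. On $(0,t_1]$ and on $(u_j,t_{j+1}]$, the terms built from $\mathcal{R}_\beta$ acting on $z_0$, $\widetilde z_j$, and the iterated integrals of $h$ are controlled by the standing bound $\|t^{2-\beta}\mathcal{R}_\beta(t)\|\le\lambda_\mathcal{R}$ together with (A1)--(A2): the weight $(t-u_j)^{2-\beta}$ built into $\|\cdot\|_j$ precisely offsets the $t^{\beta-2}$-type singularity of $\mathcal{R}_\beta$ at the left endpoint, and (A1) ensures the remaining powers are locally integrable. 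The delicate term is the one containing $\Phi_j$: one must check that $\Phi_j(r,z(r))$, which carries the singular kernels $(r-s)^{-1-\beta}$ over $[u_k,t_{k+1}]$ and over $[t_{k+1},u_{k+1}]$, is integrable against $\mathcal{R}_\beta$. Over $[u_k,t_{k+1}]$ this is immediate since $r>t_{k+1}\ge s$ keeps $(r-s)^{-1-\beta}$ bounded, while over $[t_{k+1},u_{k+1}]$ assumption (A4), $\varphi_{k+1}(s,\cdot)=(u_{k+1}-s)^{1+\beta}\psi_{k+1}(s,\cdot)$, cancels the singularity at $s=u_{k+1}$; continuity of $\mathcal{F}z$ on the open subintervals and existence of the one-sided limits at $u_j$ then follow by dominated convergence. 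On the impulse intervals $(t_j,u_j]$, $\mathcal{F}z=\varphi_j(t,z(t_j^-))$ is continuous by (A3)--(A4), and $z(t_j^-)$ is finite because $z\in PC_{2-\beta}$.

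For the contraction estimate, take $z,y\in PC_{2-\beta}([0,a];Z)$ and bound $(t-u_j)^{2-\beta}\|(\mathcal{F}z)(t)-(\mathcal{F}y)(t)\|$ for $t\in(u_j,t_{j+1}]$. The difference splits into five pieces matching the five summands of $c$: the impulsive term $\tfrac{\mathcal{R}_\beta(t-u_j)}{u_j-t_j}\int_{t_j}^{u_j}\bigl(\varphi_j(r,z(t_j))-\varphi_j(r,y(t_j))\bigr)\,dr$, estimated via (A3) and $\|t^{2-\beta}\mathcal{R}_\beta(t)\|\le\lambda_\mathcal{R}$, yields $\tfrac{\lambda_\mathcal{R}\lambda_{\varphi_j}}{(t_j-u_{j-1})^{2-\beta}}\|z-y\|_{[0,a]}$; the $h$-term, estimated via (A2) and the inner integral $\int_0^{t-r}\phi^{\beta-2}d\phi$, gives the second summand with the factor $\tfrac{(\Gamma(\beta-1))^2(t_{j+1}-u_j)^\beta}{\Gamma(2\beta-1)}$; and the $\Phi_j$-term further decomposes — its $z$-part, after interchanging the order of integration and evaluating $\int_{u_j}^{t}\int_0^{t-r}\phi^{\beta-2}(r-s)^{-1-\beta}\,d\phi\,dr$ against the weight, yields the third summand, while its $\varphi_{k+1}$-part splits according to $k+1<j$ (handled by (A3), giving the fourth summand) and $k+1=j$ (handled by (A4), pulling out $(u_j-s)^{1+\beta}$ and invoking $\lambda_{\psi_j}$, giving the fifth summand). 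On the impulse intervals $(t_j,u_j]$ the estimate is immediate from (A3) since $z(t_j^-),y(t_j^-)$ are controlled by $\|z-y\|_{[0,a]}$ and $\lambda_{\varphi_j}\le 1$. Taking the supremum over $t$ and the maximum over $j$ gives $\|\mathcal{F}z-\mathcal{F}y\|_{[0,a]}\le c\,\|z-y\|_{[0,a]}$.

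The main obstacle is the bookkeeping for the $\Phi_j$ term: correctly interchanging the nested integrations in $\phi$, $r$ and the summation index $s$, evaluating the resulting singular integrals so that the powers $(t_{k+1}-u_k)$, $(u_j-t_{k+1})$ and $(u_j-u_{k+1})$ emerge exactly with the exponents in the definition of $c$, and keeping every bound uniform in $t\in(u_j,t_{j+1}]$ after multiplication by $(t-u_j)^{2-\beta}$. Once the operator's Lipschitz constant is shown to be dominated by the $c$ of (A5), the existence and uniqueness of the mild solution follow at once from the Banach contraction principle.
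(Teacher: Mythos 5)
Your proposal is correct and follows essentially the same route as the paper: the same fixed-point operator built from the mild-solution formula, the same case-by-case contraction estimate on $(0,t_1]$, $(u_j,t_{j+1}]$ and $(t_j,u_j]$, and the same five-way decomposition of the $\Phi_j$-term matching the five summands of $c$, with (A4) invoked exactly where the paper needs it to cancel the $(r-s_1)^{-1-\beta}$ singularity at $s_1=u_j$. The only cosmetic difference is that the paper defines the operator on the impulse intervals as $\varphi_j(t,\Xi(z(t_j^-)))$ so that the $(t_j,u_j]$ estimate inherits the factor $c$ directly via $\lambda_{\varphi_j}\le 1$, whereas you bound $\varphi_j(t,z(t_j^-))$ straight from (A3); this does not change the substance of the argument.
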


\begin{proof}
Consider the operator $\Xi$ as 
\begin{eqnarray}
(\Xi z)(t) = 
 \begin{cases}
      \mathcal{R}_\beta(t)z_0 + \int_{0}^{t}\mathcal{R}_\beta(r)\widetilde{z}_0dr + \int_{0}^{t} \int_{0}^{t-r}\mathcal{R}_\beta(\phi)h(r,z(r))d\phi dr ~~~~~~\text{for} \ t \in (0,t_1],\\
     
      \varphi_j(t,\Xi(z(t_{j}^{-})))  \hspace{2cm}\text{for} \ t \in (t_j,u_j], ~~~ j=1,2,\ldots,m, \\
    \frac{\mathcal{R}_\beta(t-u_j)}{u_j-t_j}\int_{t_j}^{u_j}\varphi_j(r,z(t_j))dr + \int_{u_j}^{t}\mathcal{R}_\beta(r-u_j)\widetilde{z}_jdr + \int_{u_j}^{t} \int_{0}^{t-r}\mathcal{R}_\beta(\phi)\big(h(r,z(r)) \\
 -\Phi_j(r,z(r))\big)d\phi dr  \hspace{2cm} \text{for}\ t\in (u_j,t_{j+1}], ~~j=1,\ldots,m .
 \end{cases}
\end{eqnarray}
%%The existence of unique mild solution of the system is proved by using generalisation of Banach fixed point theorem as follows:
\indent First, it is evident that $\Xi$ maps $PC_{2-\beta}([0,a];Z)$ into itself.
It is now required to prove $\Xi$ as a contraction operator on $PC_{2-\beta}([0,a];Z)$.\\
\indent Let $z,y \in PC_{2-\beta}([0,a];Z)$, then\\
\\

\textbf{Case 1:}~~ For $t \in (0,t_1]$
\begin{eqnarray}
t^{2-\beta}\big{\|} (\Xi z)(t) - (\Xi y)(t) \big{\|} &\leq & t^{2-\beta} \int_{0}^{t} \bigg{\|} \int_{0}^{t-r}\mathcal{R}_\beta(\phi)\big( h(r,z(r))-h(r,y(r)) \big) d\phi \bigg{\|}dr \nonumber \\
&\leq & \frac{\lambda_\mathcal{R} \lambda_h}{\beta-1}t^{2-\beta}\int_{0}^{t} (t-r)^{\beta-1}\|z(r)-y(r)\|_Z dr \nonumber \\
&\leq & \frac{\lambda_\mathcal{R} \lambda_h}{\beta-1}t^{2-\beta}\int_{0}^{t}(t-r)^{\beta-1}r^{\beta-2}r^{2-\beta}\|z(r)-y(r)\|_Z dr \nonumber \\
%&\leq & \frac{\lambda_\mathcal{R} \lambda_h}{\beta-1}t^{2-\beta}\|z-y\|_0 \int_{0}^{t}(t-r)^{\beta-1}r^{\beta-2}dr \nonumber \\
&\leq & \frac{\lambda_\mathcal{R} \lambda_h}{\beta-1}\frac{\Gamma(\beta)\Gamma(\beta-1)}{\Gamma(2\beta-1)}t^{2\beta-2}t^{2-\beta}\|z-y\|_0 \nonumber \\
&\leq & \frac{\lambda_\mathcal{R} \lambda_h (\Gamma(\beta-1))^2}{\Gamma(2\beta-1)}t^\beta \|z-y\|_0 \nonumber \\
& \leq & c \|z-y\|_{[0,a]} \label{C1}
\end{eqnarray}

\textbf{Case 2:}~~ For $t \in (u_j,t_{j+1}]$, ~~$j= 1,2,\ldots,m$.
\begin{eqnarray}
&&(t-u_j)^{2-\beta} \big{\|} (\Xi z)(t) - (\Xi y)(t) \big{\|} \nonumber \\
&& \quad \leq \bigg{\|} (t-u_j)^{2-\beta} \frac{\mathcal{R}_\beta(t-u_j)}{u_j-t_j}\int_{t_j}^{u_j}\varphi_j(r,z(t_{j})) - \varphi_j(r,y(t_{j})) dr \bigg{\|}  \nonumber \\
%&& \quad \quad + (t-u_j)^{2-\beta} \int_{u_j}^{t} \bigg{\|} \mathcal{R}_\beta(r-u_j) \big(\varphi^{'}(u_j,z(t_j))-\varphi^{'}(u_j,y(t_j))\big)\bigg{\|}dr \nonumber \\
&&\quad \quad + (t-u_j)^{2-\beta} \int_{u_j}^{t} \bigg{\|}\int_0^{t-r}\mathcal{R}_\beta (\phi)\big{(} h(r,z(r)) - h(r,y(r)) \big{)}d\phi \bigg{\|} dr \nonumber\\
&&\quad \quad + (t-u_j)^{2-\beta} \int_{u_j}^{t}\bigg{\|}\int_0^{t-r}\mathcal{R}_\beta (\phi) \big{(} \Phi_j(r,z(r)) - \Phi_j(r,y(r)) \big{)} d\phi \bigg{\|} dr \nonumber  \\
&&\quad \leq  \frac{\lambda_\mathcal{R}\lambda_{\varphi_j}}{u_j-t_j} \int_{t_j}^{u_j}\big{\|} z(t_{j}) - y(t_{j}) \big{\|}_Z dr
%+ (t-u_j)^{2-\beta}\lambda_\mathcal{R} \lambda_{\varphi_j^{'}}\int_{u_j}^{t}(r-u_j)^{\beta-2}\big{\|} z(t_{j}) - y(t_{j}) \big{\|}_Z dr \nonumber \\
+ (t-u_j)^{2-\beta} \frac{\lambda_{\mathcal{R}}\lambda_h}{\beta-1} \int_{u_j}^{t} (t-r)^{\beta - 1}  \|z(r) - y(r)\|_Z dr \nonumber \\
&&\quad \quad + (t-u_j)^{2-\beta}\frac{\lambda_\mathcal{R}}{(\beta-1)} \int_{u_j}^{t} (t-r)^{\beta - 1} \big{\|} \Phi_j(r,z(r)) - \Phi_j(r,y(r)) \big{\|}_Z dr.  \label{halfeq}
\end{eqnarray}
\indent Solving $ (t-u_j)^{2-\beta} \int_{u_j}^{t} (t-r)^{\beta - 1} \big{\|} \Phi_j(r,z(r)) - \Phi_j(r,y(r)) \big{\|} dr$ seperately for $j=1,2,\ldots,m$ as for  we proceed with
\begin{eqnarray*}
\Phi_j(r,z(r)) = \dfrac{\beta (\beta -1)}{\Gamma(2-\beta)}\displaystyle{\sum_{k=0}^{j-1}}\bigg{(} \displaystyle{\int_{u_k}^{t_{k+1}}}\frac{z(s_1)}{(r-s_1)^{1+\beta}}ds_1 + \displaystyle{\int_{t_{k+1}}^{u_{k+1}}} \frac{\varphi_{k+1}(s_1,z(t_{k+1}^-))}{(r-s_1)^{1+ \beta}}ds_1 \bigg{)},~ j=1,2,\ldots,m.
\end{eqnarray*}
and for $j=0$, $\Phi_0 = 0$.\\
\indent So, basing our following evaluations for $r\in (u_j,t_{j+1}], ~ j=1,\ldots,m$ and using $r-u_k > r-t_{k+1}$
\begin{eqnarray}
\int_{u_k}^{t_{k+1}} \dfrac{\|z(s_1)-y(s_1)\|}{{(r-s_1)}^{\beta+1}} ds_1 &=& \int_{u_k}^{t_{k+1}} \dfrac{{(s_1 - u_k)}^{2-\beta} \|z(s_1)-y(s_1)\|}{{(s_1 - u_k)}^{2-\beta}{(r-s_1)}^{\beta+1}} ds_1 \nonumber \\
%& = & \dfrac{(t_{k+1}-u_k)^{\beta-1}(\beta r - t_{k+1}-(\beta-1)u_k)}{\beta (\beta -1) (r-u_k)^2 (r-t_{k+1})^\beta}\|z-y\|_k \nonumber \\
& = & \dfrac{(t_{k+1}-u_k)^{\beta-1}}{\beta (\beta -1) (r-u_k)^2 (r-t_{k+1})^\beta} [\beta (r-u_k)-(t_{k+1}-u_k)] \|z-y\|_k \nonumber \\
%& \leq & \dfrac{(t_{k+1}-u_k)^{\beta-1}\beta (r-u_k)}{ \beta(\beta -1) (r-u_k)^2 (r-t_{k+1})^\beta} \|z-y\|_k \nonumber \\
& \leq & \dfrac{(t_{k+1}-u_k)^{\beta-1}}{ (\beta -1) (r-u_k) (r-t_{k+1})^\beta} \|z-y\|_k \nonumber \\
& \leq & \dfrac{{(t_{k+1} - u_k)}^{\beta-1}}{(\beta-1) {(r-t_{k+1})}^{1+\beta } }\| z-y \|_k  \label{singint1}
\end{eqnarray}
\indent Using (\ref{singint1}) and $ t-t_{k+1} > t - u_j, ~ k=0,1,\ldots,j-1$, we obtain
\begin{eqnarray}
&&{(t-u_j)}^{2-\beta} \int_{u_j}^{t} \int_{u_k}^{t_{k+1}} {(t-r)}^{\beta-1} \dfrac{\|z(s_1)-y(s_1)\|}{{(r-s_1)}^{1+\beta}} ds_1 dr \nonumber \\
&&\quad \leq {(t-u_j)}^{2-\beta} \dfrac{{(t_{k+1} - u_k)}^{\beta-1} \| z-y \|_k}{(\beta-1 )}  \int_{u_j}^{t}  \dfrac{1}{{(t-r)}^{1-\beta}{(r-t_{k+1})}^{1+\beta }} dr \nonumber \\
&&\quad = {(t-u_j)}^{2-\beta}\dfrac{{(t_{k+1} - u_k)}^{\beta-1} \| z-y \|_k}{(\beta-1)} \dfrac{{(t-u_j)}^\beta}{\beta {(u_j - t_{k+1})}^{\beta} (t - t_{k+1})} \nonumber \\
&&\quad \leq \dfrac{(t_{j+1}-u_j){(t_{k+1} - u_k)}^{\beta-1}}{\beta (\beta-1){(u_j - t_{k+1})}^{\beta}} \|z-y\|_k. \nonumber \\
&& \quad \leq \left( \dfrac{t_{k+1} - u_k}{u_j - t_{k+1}} \right)^{\beta-1} \dfrac{\|z-y\|_k}{\beta (\beta - 1)} \label{doubint1}
%&&\quad \leq \dfrac{\|z-y\|_k}{\beta (\beta-1)} {\left( \dfrac{t_{k+1} - u_k}{u_j - t_{k+1}} \right)}^{\beta-1} . \label{doubint1}
\end{eqnarray}
\indent Next, for $k=0,1,\ldots,j-1$ and $j=1,2,\ldots,m$, we have
\begin{eqnarray}
&&\int_{t_{k+1}}^{u_{k+1}} \dfrac{\| \varphi_{k+1}(s_1,z(t_{k+1})) - \varphi_{k+1}(s_1,y(t_{k+1}))\|}{{(r-s_1)}^{\beta + 1}} ds_1 \nonumber \\
&&\quad \leq \lambda_{\varphi_{k+1}} \int_{t_{k+1}}^{u_{k+1}} \dfrac{{(t_{k+1}-u_k)}^{2-\beta} \| z(t_{k+1})-y(t_{k+1})\|}{{(t_{k+1}-u_k)}^{2-\beta}{(r-s_1)}^{\beta + 1}} ds_1 \nonumber \\
%&\leq & \dfrac{b_{j+1} \| z-y\|_{j+1} }{{(t_{j+1}-p_j)}^{1-\beta}} \Bigg{[} \dfrac{{(s-s_1)}^{-\beta-1+1}}{-\beta \times -1} {\Bigg{]}}_{t_{j+1}}^{p_{j+1}}\\
&&\quad \leq  \dfrac{\lambda_{\varphi_{k+1}} \| z-y\|_{k} }{\beta{(t_{k+1}-u_k)}^{2-\beta}} \Bigg{[} \dfrac{1}{{(r-u_{k+1})}^\beta} - \dfrac{1}{{(r-t_{k+1})}^{\beta}} {\Bigg{]}} \nonumber \\
&&\quad \leq \dfrac{\lambda_{\varphi_{k+1}} \| z-y\|_{k} }{\beta{(t_{k+1}-u_k)}^{2-\beta}} . \dfrac{1}{{(r-u_{k+1})}^\beta} \label{singint2}
%&&\quad \leq \dfrac{\lambda_{\varphi_{k+1}} (t_{k+1} - u_k)^{\beta-1}}{\beta (r-u_{k+1})^{\beta+1}} \|z-y\|_k . \label{singint2}
\end{eqnarray}
\indent Using (\ref{singint2}) for $k=0,1,2,\ldots,j-2$ and $j=2,3,\ldots,m$, we obtain
\begin{eqnarray}
&&{(t-u_j)}^{2-\beta} \int_{u_j}^{t} \int_{t_{k+1}}^{u_{k+1}} {(t-r)}^{\beta-1} \dfrac{\| \varphi_{k+1}(s_1,z(t_{k+1})) - \varphi_{k+1}(s_1,y(t_{k+1}))\|}{{(r-s_1)}^{\beta + 1}} ds_1 dr \nonumber \\
&&\quad \leq  \dfrac{\lambda_{\varphi_{k+1}} \| z-y\|_{k} }{\beta {(t_{k+1} - u_k)}^{2-\beta}} {(t-u_j)}^{2-\beta} \int_{u_j}^{t} \dfrac{1}{{{(r-u_{k+1})}^{\beta}}{(t-r)}^{1-\beta}} dr \nonumber \\
&&\quad \leq \dfrac{\lambda_{\varphi_{k+1}} \| z-y\|_{k} }{\beta {(t_{k+1} - u_k)}^{2-\beta}} {(t-u_j)}^{2-\beta} \dfrac{{(t-u_j)}^{\beta}}{\beta {(u_j - u_{k+1})}^\beta} \nonumber \\
&&\quad \leq \dfrac{\lambda_{\varphi_{k+1}}{(t_{j+1} - u_j)^2}}{\beta^2{(t_{k+1} - u_k)}^{2-\beta}{(u_j - u_{k+1})}^\beta} \| z-y\|_{k} \label{doubint2}
\end{eqnarray}
and for $k=j-1$ and $j=1,2,\ldots,m$, it is 
\begin{eqnarray}
&&{(t-u_j)}^{2-\beta} \int_{u_j}^{t} \int_{t_{j}}^{u_{j}} {(t-r)}^{\beta-1} \dfrac{\| \varphi_{j}(s_1,z(t_{j})) - \varphi_{j}(s_1,y(t_{j}))\|}{{(r-s_1)}^{\beta + 1}} ds_1 dr \nonumber\\
&&\quad \leq {(t-u_j)}^{2-\beta} \int_{u_j}^{t} \int_{t_{j}}^{u_{j}} {(t-r)}^{\beta-1} \dfrac{(u_j - s_1)^{1+\beta} \| \psi_j(s_1,z(t_j))-\psi_j(s_1,y(t_j))\|}{ (r-s_1)^{1+\beta}}ds_1 dr \nonumber \\
&&\quad \leq \lambda_{\psi_j} {(t-u_j)}^{2-\beta}  \int_{u_j}^{t} \int_{t_{j}}^{u_{j}} {(t-r)}^{\beta-1} \dfrac{(u_j - s_1)^{1+\beta} (t_j-u_{j-1})^{2-\beta}\| z(t_j)-y(t_j)\|}{(t_j-u_{j-1})^{2-\beta} (r-s_1)^{1+\beta}}ds_1 dr \nonumber \\
&&\quad \leq \dfrac{\lambda_{\psi_j}  \|z-y\|_{j-1}}{(t_j-u_{j-1})^{2-\beta}}(t-u_j)^{2-\beta}  \int_{u_j}^{t} \int_{t_{j}}^{u_{j}}  \dfrac{{(t-r)}^{\beta-1} (u_j - s_1)^{1+\beta}}{ (r-s_1)^{1+\beta}}ds_1 dr \nonumber\\
&& \quad \leq \dfrac{\lambda_{\psi_j}  \|z-y\|_{j-1}}{(t_j-u_{j-1})^{2-\beta}}(t-u_j)^{2-\beta}  \int_{u_j}^{t} {(t-r)}^{\beta-1} dr \nonumber \\
&& \quad \leq \dfrac{\lambda_{\psi_j}  \|z-y\|_{j-1}}{(t_j-u_{j-1})^{2-\beta}}(t-u_j)^{2-\beta} \dfrac{(t-u_j)^\beta}{\beta} \nonumber \\
&& \quad \leq \dfrac{\lambda_{\psi_j}(t_{j+1}-u_j)^2 }{\beta (t_j-u_{j-1})^{2-\beta}} \|z-y\|_{j-1} \label{doubint3}
%&&\quad \leq {(t-u_j)}^{2-\beta} \dfrac{\lambda_{\varphi_{j}} \| z-y\|_{j-1} }{\beta{(t_{j}-u_{j-1})}^{2-\beta}} \int_{u_j}^{t} \dfrac{1}{{(t-r)}^{1-\beta}{{(r-u_{j})}^\beta}} dr \nonumber\\
%&&\quad = {(t-u_j)}^{2-\beta} \dfrac{\lambda_{\varphi_{j}} \| z-y\|_{j-1} }{\beta{(t_{j}-u_{j-1})}^{2-\beta}} B(\beta,1-\beta) \nonumber \\
%&&\quad \leq {(t_{j+1}-u_j)}^{2-\beta} \dfrac{\lambda_{\varphi_{j}} \| z-y\|_{j-1} }{\beta{(t_{j}-u_{j-1})}^{2-\beta}} \Gamma(\beta)\Gamma(1-\beta). \label{doubint3}
\end{eqnarray}
\begin{remark}
The integral $\int_{u_j}^{t} \int_{t_j}^{u_j} \dfrac{(t-r)^{\beta-1}(u_j-s_1)^{1+\beta}}{(r-s_1)^{1+\beta}}ds_1dr$ reduces to the integral $\int_{u_j}^{t}(t-r)^{\beta-1}dr$ by splitting the integral in the neighbourhood of $u_j$ and using the fact that $\dfrac{u_j-s_1}{r-s_1}<1$ and hence the assumption $(A4)$ is required. 
\end{remark}
\indent Combining equations (\ref{phidef}), (\ref{doubint1}), (\ref{doubint2}), and (\ref{doubint3}), we obtain
\begin{eqnarray}
&&(t-u_j)^{2-\beta} \int_{u_j}^{t} (t-r)^{\beta - 1} \big{\|} \Phi_j(r,z(r)) - \Phi_j(r,y(r)) \big{\|} dr \nonumber \\
&&\quad \leq \dfrac{\|z-y\|_{[0,a]}}{\Gamma(2-\beta)} \Bigg( \sum_{k=0}^{j-1}\left( \dfrac{t_{k+1} - u_k}{u_j - t_{k+1}} \right)^{\beta-1} + \left(\frac{\beta-1}{\beta}\right)\sum_{k=0}^{j-2} \dfrac{\lambda_{\varphi_{k+1}}{(t_{j+1} - u_j)^2}}{{(t_{k+1} - u_k)}^{2-\beta}{(u_j - u_{k+1})}^\beta} \nonumber \\
&& \quad + \dfrac{(\beta-1)\lambda_{\psi_j}(t_{j+1}-u_j)^2 }{(t_j-u_{j-1})^{2-\beta}}\Bigg). \label{normphi}
%\dfrac{\|z-y\|_{[0,a]}}{ \Gamma(2-\beta)} \Bigg{(} \dfrac{1}{\beta-1}  \sum_{k=0}^{j-1}  {\left( \dfrac{t_{k+1} - u_k}{u_j - t_{k+1}} \right)}^{\beta-1} + \dfrac{1}{\beta} \sum_{k=0}^{j-2} \dfrac{\lambda_{\varphi_{k+1}}{(t_{j+1} - u_j)^2}}{{(t_{k+1} - u_k)}^{2-\beta}{(u_j - u_{k+1})}^\beta} \nonumber \\
%&&\quad \quad + \dfrac{\lambda_{\varphi_{j}} (t_{j+1}-u_j)^{2-\beta}}{(t_{j}-u_{j-1})^{2-\beta}} \Gamma(\beta) \Gamma(1-\beta) \Bigg{)}. \label{normphi}
\end{eqnarray}

%Now, proceeding with eqn. (\ref{halfeq}) after using eqn. (\ref{normphi}) as follows 
Applying equation (\ref{normphi}) in equation (\ref{halfeq}), we proceed as 
\begin{eqnarray}
&&(t-u_j)^{2-\beta} \big{\|} (\Xi z)(t) - (\Xi y)(t) \big{\|} \nonumber \\
&&\quad \leq \dfrac{\lambda_\mathcal{R} \lambda_{\varphi_j}}{ (t_j - u_{j-1})^{2-\beta}}\| z - y \|_{j-1} + \dfrac{\lambda_\mathcal{R} \lambda_h(t-u_j)^{2-\beta}}{(\beta -1)}\|z-y\|_j \int_{u_j}^{t} (t-r)^{\beta - 1} (r-u_j)^{\beta - 2} dr \nonumber \\
&&\quad \quad + \dfrac{\lambda_\mathcal{R} \|z-y\|_{[0,a]}}{(\beta-1)\Gamma(2-\beta)} \Bigg( \sum_{k=0}^{j-1}\left( \dfrac{t_{k+1} - u_k}{u_j - t_{k+1}} \right)^{\beta-1} + \left(\frac{\beta-1}{\beta}\right)\sum_{k=0}^{j-2} \dfrac{\lambda_{\varphi_{k+1}}{(t_{j+1} - u_j)^2}}{{(t_{k+1} - u_k)}^{2-\beta}{(u_j - u_{k+1})}^\beta} \nonumber \\
&& \quad \quad + \dfrac{(\beta-1)\lambda_{\psi_j}(t_{j+1}-u_j)^2 }{(t_j-u_{j-1})^{2-\beta}}\Bigg) \nonumber \\
&&\quad \leq \Bigg[ \dfrac{\lambda_\mathcal{R} \lambda_{\varphi_j}}{ (t_j - u_{j-1})^{2-\beta}} + \dfrac{\lambda_\mathcal{R} \lambda_h \left(\Gamma(\beta-1)\right)^2 (t_{j+1}-u_j)^{\beta}}{\Gamma(2\beta-1)} + \dfrac{\lambda_\mathcal{R}}{(\beta-1)\Gamma(2-\beta)} \sum_{k=0}^{j-1}\left( \dfrac{t_{k+1} - u_k}{u_j - t_{k+1}} \right)^{\beta-1} \nonumber \\
&& \quad \quad + \dfrac{\lambda_\mathcal{R}}{\beta \Gamma(2-\beta)} \sum_{k=0}^{j-2} \dfrac{\lambda_{\varphi_{k+1}}{(t_{j+1} - u_j)^2}}{{(t_{k+1} - u_k)}^{2-\beta}{(u_j - u_{k+1})}^\beta} + \dfrac{\lambda_\mathcal{R}\lambda_{\psi_j}(t_{j+1}-u_j)^2}{\Gamma(2-\beta)(t_j-u_{j-1})^{2-\beta}}\Bigg] \|z-y\|_{[0,a]} \nonumber \\
&& \quad = c \|z-y\|_{[0,a]} \label{c1}
\end{eqnarray}
%Therefore, 
%\begin{eqnarray*}
%\|Gz-Gy\|_r = \underset{t \in (u_j,t_{j+1}]}{sup} (t-u_j)^{1-\beta} \|(Gz)(t)-(Gy)(t)\| 
%\leq \nu \|z-y\|_{[0,a]}
%\end{eqnarray*}

\textbf{Case 3:} ~~For $t \in (t_j,u_j]$,~~$j= 1,2,...,m$
\begin{eqnarray}
&&(t-u_{j-1})^{2-\beta} \big{\|} (\Xi z)(t) - (\Xi y)(t) \big{\|} \nonumber \\
&&\quad = (t-u_{j-1})^{2-\beta} \| \varphi_j(t,\Xi(z(t_{j})) - \varphi_j(t,\Xi(y(t_{j}))\| \nonumber \\
&& \quad\leq \lambda_{\varphi_j} (t-u_{j-1})^{2-\beta} \| \Xi(z(t_j)) - \Xi(y(t_j)) \|_Z \nonumber \\
&& \quad \leq \lambda_{\varphi_j} \Bigg[ \dfrac{\lambda_\mathcal{R} \lambda_{\varphi_j}}{ (t_j - u_{j-1})^{2-\beta}} + \dfrac{\lambda_\mathcal{R} \lambda_h \left(\Gamma(\beta-1)\right)^2 (t_{j+1}-u_j)^{\beta}}{\Gamma(2\beta-1)} + \dfrac{\lambda_\mathcal{R}}{(\beta-1)\Gamma(2-\beta)} \sum_{k=0}^{j-1}\left( \dfrac{t_{k+1} - u_k}{u_j - t_{k+1}} \right)^{\beta-1} \nonumber \\
&& \quad \quad + \dfrac{\lambda_\mathcal{R}}{\beta \Gamma(2-\beta)} \sum_{k=0}^{j-2} \dfrac{\lambda_{\varphi_{k+1}}{(t_{j+1} - u_j)^2}}{{(t_{k+1} - u_k)}^{2-\beta}{(u_j - u_{k+1})}^\beta} + \dfrac{\lambda_\mathcal{R}\lambda_{\psi_j}(t_{j+1}-u_j)^2}{\Gamma(2-\beta)(t_j-u_{j-1})^{2-\beta}}\Bigg] \| z - y \|_{[0,a]} \nonumber \\
&& \quad \leq c \|z-y\|_{[0,a]}. \label{c2}
\end{eqnarray}

Therefore, on combining equations (\ref{C1}), (\ref{c1}) and (\ref{c2}), we obtain 
\begin{eqnarray*}
\|\Xi z- \Xi y\|_{[0,a]} \leq c \| z - y \|_{[0,a]} 
%~~~ \mbox{for}~ r = 0,1,...,m.
\end{eqnarray*}
\indent Thus, $\Xi$ is contraction and it is evident through Banach fixed point theorem that $\Xi$ possess a unique fixed point $z(\cdot)$ in $PC_{2-\beta}([0,a];Z)$ which serves as the requisite solution of system (\ref{system}).
\end{proof}

\section{Example} \label{5}
\indent Consider the below mentioned initial value problem 
\begin{eqnarray}
&&_{0}D_{t}^{\beta}z(t,v) = \frac{\partial^2}{\partial v^2} z(t,v) + h(t,z(t,v)), ~t \in \cup_{j=0}^{3}(u_j,t_{j+1}] \subset [0,1],~v \in [0,\pi], \label{sysex1}  \\
&&z(t,v) = \varphi_j(t,z(t_j,v)) , ~~ t \in \cup_{j=1}^{3}(t_j,u_j], ~v\in [0,\pi], \nonumber \\
&&z(t,0) = 0 = z(t,\pi), ~ t \in (0,1], \nonumber \\
&&_{0}I_{t}^{2-\beta}z(t,v)|_{t=0} = z_0(v) ,~~v \in [0,\pi] \nonumber\\
&&_{u_j}I_t^{2-\beta}z(t,v)|_{t=u_j} = \dfrac{1}{u_j-t_j}\int_{t_j}^{u_j} \varphi_j(r, z(t_j,v))dr, ~~ v \in [0,\pi] \nonumber \\
&&_{u_j}D_t^{\beta-1}z(t,v)|_{t=u_j} = {z}_j^\prime(v) , ~~v \in [0,\pi], j=0,1,\ldots,3 \nonumber 
\end{eqnarray}
  %$\textcolor{blue}{\text{with~} u_0=0, t_1 = 0.22, u_1 = 0.26, t_2 = 0.48, u_2 = 0.52, t_3 = 0.74, u_3 = 0.78, t_4 = 1}$. \\
\indent Let $W = W^{'} = L^2[0,\pi]$ and the operator $A:D(A) \subset W \rightarrow W$ defined as 
\begin{eqnarray*}
Aw = w''
\end{eqnarray*}
where
\begin{eqnarray*}
D(A) =  &&\Bigg \{ w\in W ~\mid~ w, ~\frac{\partial w}{\partial v} \text{ ~are absolutely continuous},~ \frac{\partial^2 w}{\partial v^2} \in W\\
&&\quad \text{ and } w(0) = 0 = w(\pi) \Bigg \}.
\end{eqnarray*}
\indent Then the $\beta$- order fractional resolvent $\mathcal{R}_\beta(t)$ generated by 
\begin{eqnarray*}
\left(\mathcal{R}_\beta(t)w\right)(v)=\sum_{\gamma=1}^{\infty} t^{\beta-2} E_{\beta,\beta-1}(-\gamma^2 t^\beta)w_\gamma \sin(\gamma v),
\end{eqnarray*}
where $-\gamma^2$ are the eigenvalues of A and $\sin(\gamma v)$ be the corresponding eigenvectors respectively and $w(v)=\sum_{\gamma=1}^{\infty} w_\gamma \sin(\gamma v)$ (refer \cite{fracresolvent}).\\
The abstract form of the system (\ref{system}) is expressed as:
\begin{eqnarray}
&&_{0}D_{t}^{\beta}\widehat{z}(t) = A\widehat{z}(t) +  h(t,\widehat{z}(t)),~~ t \in (0,1],\\
&&\widehat{z}(t) = \varphi_j(t,\widehat{z}(t_j)) , ~~ t \in \cup_{j=1}^{3}(t_j,u_j], ~v\in [0,\pi], \nonumber \\
%&&z(t,0) = 0 = z(t,\pi), ~ t \in (0,1], \nonumber \\
&&_{0}I_{t}^{2-\beta}\widehat{z}(t)|_{t=0} = \widehat{z}_0  , \nonumber\\
&&_{u_j}I_t^{2-\beta}\widehat{z}(t)|_{t=u_j} = \dfrac{1}{u_j-t_j}\int_{t_j}^{u_j} \varphi_j(r, \widehat{z}(t_j))dr,  \nonumber \\
&&_{u_j}D_t^{\beta-1}\widehat{z}(t)|_{t=u_j} = \widehat{{z}_j}^\prime, j=0,1,\ldots,3. \nonumber 
\end{eqnarray}
where $\widehat{z}(t)=z(t,\cdot)$, $\widehat{z}_0=z_0(\cdot)$ and $\widehat{{z}_j}^\prime = {z}_j^\prime (v)$.\\
%%\indent Let us choose the nonlinear function $h$ as \\
%%$h(t,z(t,v))= 1+(t-u_j)^2 + \delta (t-u_j)^\eta [ z(t,v) + \sin z(t,v)]$,\\ where $t\in (u_j,u_{j+1}]$ and $j=0,1,\ldots,3$. $\delta$ and $\eta$ are constants, and $\eta \geq 2-\beta$.\\
%%\indent Now, 
%%\begin{eqnarray*}
%%\|h(t,z(t,v)) - h(t,y(t,v))\| &\leq & |\delta| (t-u_j)^\eta \| z(t,v)-y(t,v) + \sin z(t,v) - \sin y(t,v)\|\\
%%&\leq & |\delta| (t-u_j)^{\eta + \beta -2}(t-p_r)^{2 - \beta}\bigg[\|z(t,v)-y(t,v)\|\\
%% & \quad + & \Big{\|}2\cos\left(\frac{z(t,v)+y(t,v)}{2}\right)\sin \left(\frac{z(t,v)-y(t,v)}{2}\right)\Big{\|} \bigg]\\
%% & \leq & 2|\delta |(t-u_j)^{2 - \eta} \| z(t,v)-y(t,v) \|\\
%% & \leq & 2|\delta| \| z(t,v)-y(t,v) \|.
%%\end{eqnarray*}
%%\indent The assumption $(A2)$ is satisfied with $\lambda_h = 2|\delta |$.\\
%%\textcolor{blue}{Let us choose the impulsive functions $\varphi_j(t, z(t_j,v))$ as 
%%$\varphi_j(t, z(t_j,v)) = (u_j-t)^{1+\beta}t^2\|z(t_j,v)\|$,\\ for $t \in (t_j,u_j] , ~j = 1,\ldots,3.$\\
%%Now, 
%%\begin{eqnarray*}
%%\| \varphi_j(t, z(t_j,v)) - \varphi_j(t, y(t_j,v)) \| & \leq & (u_j-t)^{1+\beta}t^2\|z(t_j,v) - y(t_j,v)\| \\
%%& \leq & (u_j-t)^{1+\beta} \|z(t_j,v) - y(t_j,v)\| \\
%%& \leq & \ell \|z(t_j,v) - y(t_j,v)\|
%% \end{eqnarray*}
%%Clearly, the assumption $(A3)$ is satisfied with $\lambda_{\varphi_j} = \ell = (0.0016)^{1+\beta}$ and assumption $(A4)$ is satisfied with $\psi_j(t,z(t_j,v))=t^2 \|z(t_j,v))\|$.}
Assumption $(A5)$ is satisfied by choosing $\delta$ to be sufficiently close to zero. Thus, existence and uniqueness of the system (\ref{1}) is achieved from Theorem $4.1$.
% for $\beta > 3/2$.
%Thus, approximate controllability of (\ref{sysex2}) follows from Theorem $2$ without assuming $T(t)$ as differentiable semigroup.

\section{Concluding remarks} \label{6}

%The contribution of the paper is that it established sufficient conditions 
The paper contributed in the construction of mild solution for rarely studied nonlinear fractional evolution systems governed with higher order Riemann-Liouville derivatives involving non-instantaneous impulses. This study guarantees the existence and uniqueness of mild solution of system (\ref{1}) under suitable assumptions. The study of non-instantaneous impulsive systems is broad in scope and therefore, this artefact serves as a tool for further potential study of the concerned system for controllability and stability analysis. The analysis can be performed for approximate or partial controllability with general nonlocal conditions or in inclusions (see \cite{baleanu1,abdul:partial,mahm3,A shukla3} for some idea). Infact, establishment of existence and uniqueness results for mild solutions of the concerned system when Lipschitz continuity is exempted for the nonlinear function is also a matter of prime investigation.

%Non-instantaneous impulsive systems are studied for a variety of reasons.

%%%The article established the results for existence and uniqueness of the non-instantaneous impulsive fractional differential systems involving Riemann-Liouville derivatives. The definition of the mild solution of the system (\ref{1}) has been constructed utilizing the concept of fractional semigroup and appropriate integral type initial conditions. The existence and uniqueness results have been derived with the help of Banach's fixed point approach. 
%%%%Approximate controllability has been established using Lemma 3 along with iterative techniques for control sequence.
%%%The study of non-instantaneous impulsive systems covers a wide range of applications. 
%%%The proposed future work 
%%%%in alignment with the present work
%%% emerges from relaxing the Lipschitz continuity on the nonlinear operator. If the nonlinear operator $h$ is not Lipschitz, then even existence of solution is the matter of prime investigation. Also, the considered system  with general nonlocal conditions can be analyzed for %presented extracts/findings/work
%%%%present findings can be extended for the 
%%%approximate controllability, partial approximate controllability or finite approximate controllability. For some idea, see \cite{A shukla1,abdul:partial,mahm2,mahm3}.

%---- Bibliography ----
%


\begin{thebibliography}{7}
%
%\bibitem{r p ag2}
%R. Agarwal, S. Hristova, D. O'Regan, Basic concepts of Riemann-Liouville fractional differential equations with non-instantaneous impulses, Symmetry, $11(5):614$, $2019$.
\bibitem{baleanu1}
M. M. Arjunan, V. Kavitha and D. Baleanu, A new existence results on fractional differential inclusions with state-dependent delay and Mittag-Leffler kernel in Banach space, An. St. Univ. Ovidius Constanta, Ser. Mat., $30(2)$, $2022$, pp. $5-24$.
\bibitem{rp ag1}
R. Agarwal, S. Hristova, D. O'Regan, Caputo fractional differential equations with non-instantanous impulses and strict stability by Lypunov functions, Filomat, $31(16)$, $2017$, pp. $5217-5239$.  
\bibitem{rp ag main}
R. Agarwal, S. Hristova, D. O'Regan, Integral representations of scalar delay non-instantaneous impulsive Riemann-Liouville fractional differential equations, Appl. Anal., $2021$, pp. $1-19$.
%\bibitem{bala}
%K. Balachandran, J. Y. Park, J. J. Trujillo, Controllability of nonlinear fractional dynamical systems, Nonlinear Anal., $75(4)$, 2012, pp. $1919-1926$.
\bibitem{SB1}
S. Barnett, Introduction to mathematical control theory, Clarendon Press, Oxford, $1975$.
\bibitem{book hindawi}
M. Benchohra, J. Henderson, S. Ntonyas, Impulsive differential equations and inclusions, Contemporary Mathematics and its Applications, $2$, Hindawi Publishing Corporation, New York. $2006$. 
%\bibitem{Bys}
%L. Byszewski, V. Lakshmikantham, Theorem about the existence and uniqueness of a solution of a nonlocal abstract Cauchy problem in a Banach space, Appl. Anal., 40(1), 2007, pp. 11–19.
%\bibitem{d n pandey}
%R. Chaudhary, D.N. Pandey, Monotone iterative technique for impulsive Riemann-Liouville fractional differential equations, Filomat, 32, 2008, pp. 3381-3395.
\bibitem{curtain}
R.F. Curtain, H. Zwart, An introduction to infinite-dimensional linear systems theory, $21$, Springer Science and Business Media, 2012.
\bibitem{A shukla1}
C. Dineshkumar, R. Udhayakumar, V. Vijayakumar, A. Shukla, K.S. Nisar, A note on approximate controllability for nonlocal fractional evolution stochastic integrodifferential inclusions of order $r \in (1,2)$ with delay, Chaos Solitons Fractals, $153$, $2021$, pp. $111565$.
%\bibitem{dauer:mahmudov}
%J. P. Dauer, N. I. Mahmudov, Approximate controllability of semilinear function equations in Hilbert spaces, J. Math. Anal. Appl., 273, 2002, pp. 310–327.
%\bibitem{devies}
%I. Devies, P. Jackreece, Controllability and null controllability of linear systems, J. Appl. Sci. Environ. Manag.,9, 2005, pp. 31-36.
\bibitem{de gruyter}
J. R. Graef, J. Henderson, A. Ouahab, Impulsive differential inclusions: A fixed point approach, De Gruyter series in nonlinear analysis and applications, $20$, De-Gruyter, Berlin, $2013$.
%\bibitem{abdul:applicable}
%A. Haq, N. Sukavanam, Controllability of second-order nonlocal retarded semilinear systems with delay in control, Applicable Analysis 99 (16), 2020, pp. 2741-2754.

\bibitem{abdul higher order}
A. Haq, N. Sukavanam, Existence and controllability of higher-order nonlinear fractional integrodifferential systems via resolvent operator, Math. Methods Appl. Sci., $2022$.
% doi: 10.22541/au.163613959.95327228/v1.
\bibitem{abdul}
A. Haq, N. Sukavanam, Existence and approximate controllability of Riemann-Liouville fractional integrodifferential systems with damping. Chaos Solitons Fractals, $139$, $2020$, pp. $110043-110053$. 
\bibitem{abdul:partial}
A. Haq, N. Sukavanam, Partial approximate controllability of fractional systems with Riemann-Liouville derivatives and nonlocal conditions, Rend. Circ. Mat. Palermo $(2)$, $70$, $2021$, pp. $1099-1114$.
\bibitem{zhou1}
J. W. He and Y. Zhou, H\"{o}lder regularity for non-autonomous fractional evolution equations, Fract. Calc. Appl. Anal., $25$, $2022$, pp. $378-407$.
\bibitem{donal jmaa}
E. Herna'ndez, Abstract impulsive differential equations without predefined time impulses, J. Math. Anal. Appl., $491$, $2020$, $124288$.
\bibitem{bala nonlinear}
E. Herna'ndez, D. O'Regan, E. Balachandran, On recent developments in the theory of abstract diﬀerential equations with fractional derivatives, Nonlinear Anal., $73$, $2010$, pp. $3462–3471$.
\bibitem{proc am}
E. Herna'ndez, D. O'Regan, On a new class of abstract impulsive differential equations, Proc. Am. Math. Soc., $141(5)$, $2013$, pp. $1641-1649$.
\bibitem{topo 2015}
E. Herna'ndez, M. Pierre, D. O'Regan, On abstract differential equations with non-instantaneous impulses, Topol. Methods Nonlinear Anal., $46(2)$, $2015$, pp. $1067-1088$.
\bibitem {hey:podlubny}
N. Heymans, I. Podlubny, Physical interpretation of initial conditions for fractional differential equations with Riemann-Liouville fractional derivatives, Rheol Acta, $45$, $2006$, pp. $765-771$.
%\bibitem{hilfer}
%R. Hilfer, Applications of fractional calculus in physics, World Scientic Publ. Co., Singapore, 2000.
%\bibitem{j donal}
%J. Juan Nieto
%\bibitem {kalman}
%R. E. Kalman, Contributions to the theory of optimal control, Bol. Soc. Mat. Mexicana $5(2)$, $1960$, pp. $102-119$.
%R. E. Kalman, Controllablity of linear dynamical systems, Contrib. Diﬀ. Equ., $1$, $1963$, pp. $190–213$.
\bibitem{A shukla2}
K. Kavitha, V. Vijayakumar, A. Shukla, K.S. Nisar, R. Udhayakumar, Results on approximate controllability of Sobolev-type fractional neutral differential inclusions of Clarke subdifferential type, Chaos Solitons, $151$, $2021$, pp. $111264$.
\bibitem {kilbas:srivast}
A.A. Kilbas, H.M. Srivastava, J.J. Trujillo, Theory and applications of fractional differential equations, north-Holland math. stud., $204$, Elsevier Science, Amsterdam, $2006$.
% Kumar, M. Malik, M. Sajid, D. Baleanu, Existence of local and global solutions to fractional order fuzzy delay differential equation with non-instantaneous impulses, 
%\bibitem{klamka}
%J. Klamka, Constrained controllability of semilinear systems with delays, Nonlinear Dyn. 56, 2009, pp. 169–177.
%\bibitem{visco}
%R.C. Koeller, Applications of fractional calculus to the theory of viscoelasticity, J. Appl. Mech., $51(2)$, $1984$, pp. $299-307$.
%\bibitem{surendra:j.diff}
%S. Kumar, N. Sukavanam, Approximate controllability of fractional order semilinear systems with bounded delay, J. Diﬀerential Equations, $252$, $2012$, pp. $6163–6174$.
%\bibitem{muslim}
%V. Kumar, M. Malik, A. Debbouche, Stability and controllability analysis of fractional damped differential system with non-instantaneous impulses, Appl. Math. Comput., $391$, $2021$, $125633$. 
%\bibitem{lakshmi}
%V. Lakshmikantham, Theory of fractional functional differential equations, Nonlinear Analysis, $69$, $2008$, pp. $3337-3343$.
%\bibitem{liu2}
%Z.H. Liu, J.H. Sun, I. Szanto, Monotone iterative technique for riemann-liouville fractional integro-differential equations with advanced arguments, Results in Math, 63, 2013, pp. 1277-1287.
%\bibitem{AML1}
%Y. Liu, Z. Weiguo, L. Xiping, A sufficient condition for the existence of a positive solution for a nonlinear fractional differential equation with the Riemann-Liouville derivative, Appl. Math. Lett., $25$, $2012$ pp. $1986-1992$.
%\bibitem{diffusion}
%Z.H. Liu, S.D. Zeng, Y.R. Bai, Maximum principles for multi term space time variable order fractional diffusion equations and their applications, Fract. Calc. Appl. Anal., $19(1)$, $2016$, pp. $188-211$.
\bibitem{liu:siam}
Z. Liu, X. Li, Approximate controllability of fractional evolution systems with Riemann–Liouville fractional derivatives, SIAM J. Control Optim., $53(1)$, $2015$, pp. $1920-1933$.
%\bibitem{mahm2}
%N.I. Mahmudov, Finite-approximate controllability of evolution equations, Appl. Comput. Math. $16$, $2017$, pp. $159–167$.
\bibitem{mahm3}
N.I. Mahmudov, Partial-approximate controllability of nonlocal fractional evolution equations via approximating method, Appl. Math. Comput., $334$, $2018$, pp. $227-238$.
\bibitem{fracresolvent}
Z. Mei, J. Peng and Y. Zhang, An operator theoretical approach to Riemann-Liouville fractional Cauchy problem, Math. nachr., $288$, $2015$, pp. $784797$.
%\bibitem{monje}
%A. Monje, Y.Q. Chen, B.M. Vinagre, D. Xue, V. Feliu, Fractional-Order Systems and Controls, Fundamentals and Applications, Springer-Verlag, London, 2010.
%\bibitem {oldh:span}
%K.B. Oldham, J.Spanier, The fractional calculus, Academic Press, New York, $1974$.
%\bibitem{naito}
%K. Naito, Controllability of semilinear control systems dominated by the linear part, SIAM J. Control Optim., 25(3), 1987, pp. 715–722.
\bibitem {pazy}
A. Pazy, Semigroups of linear Operators and applications to partial differential equations, Springer-Verlag, New York, $1983$.
\bibitem{jde1}
M. F. Pinaud, H. R. Henriquez, Controllability of systems with a general nonlocal condition, J. Differential Equations, $269(6)$, $2020$, $10303$.
\bibitem {podlubny}
I. Podlubny, Fractional differential equations, Academic Press, San Diego, CA, $1999$.
\bibitem{lavina}
L. Sahijwani, N. Sukavanam, A. Haq, Non-instantaneous impulsive Riemann-Liouville fractional differential systems: existence and controllability analysis, \href{https://arxiv.org/abs/2112.07643}{arXiv preprint arXiv:2112.07643}, $2021$.
\bibitem{A shukla3}
A. Shukla, V. Vijaykumar, K.S. Nisar, A new exploration on the existence and approximate controllability for fractional semilinear impulsive control systems of order $r \in (1,2)$, Chaos Solitons Fractals, 154, 2022, pp. $111615$.
%\bibitem{AML2}
%L. Zhang, B. Ahmad, G. Wang, The existence of an extremal solution to a nonlinear system with the right-handed Riemann-Liouville fractional derivative, Appl. Math. Lett., $31$, $2014$, pp. $1-6$.
\bibitem{min zhang}
X.M. Zhang, A new method for searching the integral solution of system of Riemann-Liouville fractional differential equations with non-instantaneous impulses, J. Comput. Appl. Math., $388$, $2021$, pp. $113307$.
\bibitem{zhou2}
Y. Zhou, Infinite interval problems for fractional evolution equations, Mathematics, $10(6)$, $2022$, pp. $900$.
\bibitem{zhou3}
Y. Zhou and J. W. He, New results on controllability of fractional evolution systems with order $ \alpha\in (1,2) $, Evol. Equ. Control Theory, $10(3)$, $2021$, pp. $491-509$.
\bibitem{zhou}
Y. Zhou and F. Jiao, Existence of mild solutions for  fractional neutral evolution equations, Comput. Math. Appl., $59$, $2010$, pp. $1063-1077$.
\end{thebibliography}
\end{document}